\documentclass[12pt]{article}
\usepackage{epic}
\usepackage{authblk}
\usepackage{amssymb}
\usepackage{graphicx}
\begin{document}
\newcommand{\ssim}{\mathrel{\raisebox{0.3ex}{$\scriptstyle\sim$}}}
\newcommand{\nssim}
{\mathrel{\raisebox{0.3ex}{$\scriptstyle\nsim$}}}

\newcommand{\up}{\vspace*{-0.136cm}}
\newcommand{\qed}{\hfill$\rule{.05in}{.1in}$ \vspace{.3cm}}
\newcommand{\pf}{\noindent{\bf Proof: }}
\newtheorem{thm}{Theorem}
\newtheorem{lem}{Lemma}
\newtheorem{prop}{Proposition}
\newtheorem{ex}{Example}
\newtheorem{cor}{Corollary}
\newtheorem{conj}{Conjecture}
\newtheorem{prob}{Problem}
\newtheorem{claim}{Claim}
\newcommand{\beq}{\begin{equation}}
\newcommand{\eeq}{\end{equation}}
\newcommand{\<}[1]{\left\langle{#1}\right\rangle}
\newcommand{\be}{\begin{enumerate}}
\newcommand{\ee}{\end{enumerate}}
\newcommand{\al}{\alpha}
\newcommand{\ep}{\epsilon}
\newcommand{\si}{\sigma}
\newcommand{\om}{\omega}
\newcommand{\la}{\lambda}
\newcommand{\La}{\Lambda}
\newcommand{\ga}{\gamma}
\newcommand{\im}{\Rightarrow}
\newcommand{\2}{\vspace{.2cm}}
\newcommand{\es}{\emptyset}
\newcommand{\ad}{\ssim}
\newcommand{\nad}{\pm}
\newcommand{\lz}{\langle}
\newcommand{\rz}{\rangle}
\newcommand{\bh}{\hat}

\title{On Camby--Plein's Characterization of Domination Perfect Graphs}
\author{Vadim Zverovich\footnote{E-mail: vadim.zverovich@uwe.ac.uk}}
\affil{\footnotesize Mathematics and Statistics Research Group, University of the West of England, Bristol, UK}

\maketitle

\vspace*{-.3cm}
\begin{abstract}
We show that all results stated in [E.~Camby, F.~Plein, \textit{Discrete Appl. Math.}~{\bf 217} (2017) 711--717] are either previously known or incorrect.
For example, Camby and Plein claimed to provide counterexamples to the 1995 characterization of domination perfect graphs due to Zverovich and Zverovich; however, these counterexamples are not valid. 
Moreover, the new characterization of domination perfect graphs proposed in that paper is incorrect.

For completeness, we present a relatively brief proof of the 1995 characterization of domination perfect graphs due to Zverovich and Zverovich.

\vspace*{.3cm} {\footnotesize \noindent Keywords: {\it
domination perfect graphs, domination number, independent domination
number, induced subgraph characterization
}}
\end{abstract}

\section{Summary of Results on Domination Perfect\\ Graphs}

All graphs will be  finite  and  undirected,  without  loops  or
multiple edges. If $G$ is a graph, $V(G)$ denotes the set,  and $\vert G\vert $
the number, of vertices in $G$. We will denote the neighborhood of
a vertex $x$ by $N(x)$. More generally, $N(X) = \bigcup^{}_{x\in X}N(x)$ for $X\subseteq V(G)$.
We will write $x\ssim X$ ($x\nssim X$)  to  indicate  that  a  vertex $x$  is
adjacent to all vertices (no vertex) of $X\subseteq V(G)$.  For  a  set  of
vertices $X$, $\langle X\rangle$ denotes the subgraph of $G$ induced by $X$.
If $X$, $Y$ are subsets  of $V(G)$,  then $X$  {\it dominates} $Y$  if
$Y\subseteq N(X)\cup X$. In particular, if $X$ dominates $V(G)$, then $X$ is called a
{\it dominating set} of $G$. An {\it independent dominating set} is  a  vertex
subset that is both independent and dominating, or equivalently,
is maximal  independent.  The  {\it domination  number} $\gamma (G)$  is  the
minimum cardinality taken over all dominating sets of $G$, and the
{\it independent domination number} $i (G)$ is  the  minimum  cardinality
taken over all maximal independent sets of vertices of $G$.
In all our figures a dotted line will always mean that the
corresponding edge may or may not be  a  part  of  the  depicted
graph.

Sumner and Moore \cite{Sum1} define a graph $G$  to  be  {\it domination
perfect} if $\gamma (H) = i (H)$, for every induced subgraph $H$ of $G$.
A graph $G$ is called {\it minimal  domination  imperfect}  if $G$  is  not
domination perfect and $\gamma (H) = i (H)$,  for  every  proper  induced
subgraph $H$ of $G$.
The related classes of graphs such as irredundance perfect graphs,
upper domination perfect graphs and upper irredundance 
perfect graphs are studied as well (e.g., see \cite{Gut,Vol1,Vol2,Vol3,Zve2}).
Various real-life applications of domination in graphs can be found in \cite{Zve3}.

Allan and Laskar [1]  established  that $K_{1,3}$-free  graphs are  domination perfect:

\begin{thm}
[Allan and Laskar \cite{All}]  
\label{A-L}
If $G$  has  no  induced  subgraph
isomorphic to $K_{1,3}$, then $\gamma (G) = i (G)$.
\end{thm}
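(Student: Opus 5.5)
Since every independent dominating set is dominating, $\gamma(G)\le i(G)$ always holds, so it suffices to exhibit an independent dominating set of size $\gamma(G)$. The plan is an extremal argument: among all minimum dominating sets $D$ of $G$ (so $|D|=\gamma(G)$), choose one for which the number of edges of $\langle D\rangle$ is as small as possible. If $\langle D\rangle$ has no edges, $D$ is independent and dominating, hence $i(G)\le\gamma(G)$ and we are done. So we assume some edge $uv$ lies in $\langle D\rangle$ and derive a contradiction, either with the minimality of $|D|$ or with the minimality of the number of edges.

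The key notion is the set of \emph{private neighbours} of $u$ with respect to $D$: vertices $x\notin D$ adjacent to $u$ and to no other vertex of $D$. Because $v\in D$ dominates $u$, $u$ itself is not needed to be dominated by $u$. If $u$ had no private neighbours, then $D\setminus\{u\}$ would still dominate $G$, contradicting $|D|=\gamma(G)$. Hence the private neighbour set $P_u$ is nonempty. The next step is to show that $P_u$ induces a clique. Suppose $x,y\in P_u$ are nonadjacent. Then $u$ is adjacent to $v,x,y$, where $x,y\notin D$ are not adjacent to $v$ (they are private to $u$) and not adjacent to each other. So $\{u;v,x,y\}$ induces $K_{1,3}$, contradicting the hypothesis. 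Thus $P_u$ is a nonempty clique.

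Now pick any $x\in P_u$ and set $D'=(D\setminus\{u\})\cup\{x\}$. Then $|D'|=|D|$, and $D'$ is dominating: $u$ is dominated by $v$ (or by $x$); every vertex of $P_u$ is dominated by $x$ since $P_u$ is a clique; every other vertex was dominated by some member of $D\setminus\{u\}$. So $D'$ is a minimum dominating set. Edges of $\langle D'\rangle$ incident to $x$ are absent, since $x$ is adjacent to no vertex of $D\setminus\{u\}$, while the edge $uv$ has been removed. Therefore $\langle D'\rangle$ has strictly fewer edges than $\langle D\rangle$, contradicting the choice of $D$. The only delicate step is the clique property of $P_u$, where the claw must be correctly identified using the edge $uv$ inside $D$; the remaining steps are routine bookkeeping.
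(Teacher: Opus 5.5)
Your proof is correct, and it takes a different route from the paper. The paper never proves Theorem~\ref{A-L} directly. It obtains it as a special case of stronger results: the case $k=3$ of the Bollob\'as--Cockayne inequality, or Corollary~\ref{GiTi} (each of $G_1$--$G_4$, $T_1$, $T_2$ contains an induced $K_{1,3}$), or Theorem~\ref{ZZ}, which is proved in Section~2.

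The proof of Theorem~\ref{ZZ} starts from your extremal choice. It takes a minimum dominating set $D$ with the fewest edges in $\langle D\rangle$ and an edge $uv$ of $\langle D\rangle$. Its set $A=\{x\notin D : N(x)\cap D=\{u\}\}$ is your $P_u$. It then reduces a minimal counterexample to $V(F)=A\cup B\cup C\cup\{u,v\}$ (Lemma~1, which swaps $\{u,v\}$ for an independent pair). Its first step yields nonadjacent $a,b\in A$, i.e.\ an induced claw $\langle u;v,a,b\rangle$, which is exactly the configuration your clique argument excludes.

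Your single-vertex swap $D'=(D\setminus\{u\})\cup\{x\}$ makes the argument self-contained. It avoids both the minimal-counterexample reduction and the long forbidden-subgraph case analysis. It is also constructive: discard redundant vertices, then repeat the swap, and any dominating set of a claw-free graph becomes an independent dominating set that is no larger.

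The paper's route instead places the claw-free case inside an exact characterization of domination perfect graphs. Deriving Theorem~\ref{A-L} that way requires checking that every forbidden graph contains an induced $K_{1,3}$, and it rests on much heavier machinery than the one-line claw observation you use.
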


This theorem implies that line graphs and middle graphs are domination perfect (the former proved independently by Gupta; see Theorem~10.5 in \cite{Har1}).
Theorem  \ref{A-L} also
improves the result of Mitchell and Hedetniemi \cite{Mit} that the
line graph $L(T)$  of  a  tree $T$  satisfies $\gamma (L(T))=i(L(T))$.
We may also mention the characterization of trees $T$ with $\gamma (T) = i (T)$  
due to Harary and Livingston \cite{Har2}.

The following results were obtained by Sumner and Moore \cite{Sum1,Sum2}:

\begin{thm}
[Sumner and Moore \cite{Sum1,Sum2}] 
A graph $G$  is  domination  perfect
if and only if $\gamma (H) = i (H)$  for every induced subgraph $H$ of $G$ with
$\gamma (H)=2$.
\end{thm}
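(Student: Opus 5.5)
The ``only if'' direction is immediate from the definition. For the converse I would argue by contradiction via a minimal counterexample. Suppose $\gamma(H)=i(H)$ for every induced subgraph $H$ of $G$ with $\gamma(H)=2$, but $G$ is not domination perfect. Choose an induced subgraph $H$ of $G$ with $\gamma(H)<i(H)$ and $|H|$ minimum; then $H$ is minimal domination imperfect. Every induced subgraph of $H$ is an induced subgraph of $G$, so the hypothesis still applies inside $H$. The goal is to produce an induced subgraph $H'$ of $H$ with $\gamma(H')=2<i(H')$, which contradicts the hypothesis.

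\textbf{Reduction step.} Let $k=\gamma(H)$ and fix a minimum dominating set $D$ of $H$ that contains as few edges as possible. Since $\gamma(H)<i(H)$, $D$ is not independent, so there are adjacent vertices $u,v\in D$. I would then try to show that $k=2$. Suppose $k\ge 3$. Consider the set $W$ of vertices of $H$ that are dominated by $\{u,v\}$ but not by $D\setminus\{u,v\}$ (private neighbours of $u$ or $v$, including $u,v$ themselves where relevant), and let $H_1=\langle W\cup\{u,v\}\rangle$. Then $\gamma(H_1)\le 2$.
\begin{itemize}
\item If $\gamma(H_1)=2$, the hypothesis gives $i(H_1)=2$, so $H_1$ has an independent dominating pair $\{a,b\}$.
\item If $\gamma(H_1)=1$, a single vertex of $H_1$ dominates it.
\end{itemize}
Replacing $u,v$ by $a,b$, or by the single vertex padded suitably, gives a new set $D'=(D\setminus\{u,v\})\cup\{a,b\}$ that still dominates $H$ and has size at most $k$. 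The aim is for $D'$ to contain fewer edges, contradicting the choice of $D$.

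\textbf{Main obstacle.} The difficulty is that $a$ or $b$ may be adjacent to other vertices of $D$, so the edge count need not drop. I would first try the alternative route through minimality of $H$. Let $R$ be the set of vertices dominated by $D\setminus\{u,v\}$, and let $H_2=H-(\text{suitable set})$ be a proper induced subgraph whose domination number is known; minimality gives $i(H_2)=\gamma(H_2)$. I would then combine an independent dominating set of $H_2$ with an independent dominating pair for the part dominated only by $\{u,v\}$, taking care to delete the neighbourhood of the former so that the union stays independent. This yields an independent dominating set of $H$ of size $k$, contradicting $\gamma(H)<i(H)$. Getting the bookkeeping of the deleted neighbourhoods right, so that the remaining part still has domination number at most $2$, is the step I expect to need care.

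\textbf{Concluding step.} Once $k\ge3$ is ruled out, we have $k=2$, because $k=1$ forces $i=1$. Then $H$ itself is an induced subgraph of $G$ with $\gamma(H)=2<i(H)$, which contradicts the hypothesis and completes the proof.
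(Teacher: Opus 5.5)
\paragraph{There is a gap.} As written, the proposal does not prove the converse. You abandon your reduction step because of an obstacle that does not exist. The route you substitute is never actually specified: the ``suitable set'' defining $H_2$ is not given. Its key claim is also unsupported. Once you delete the closed neighbourhood of an independent dominating set of $H_2$, nothing guarantees that what remains of $W$ can still be dominated by two mutually non-adjacent vertices that are also non-adjacent to that set; for instance, $u$ or $v$ may already have been removed. The step you flag as ``needing care'' is the whole difficulty, and it is left open.

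\paragraph{The missing observation is already in your definition of $W$.} A vertex of $W\setminus\{u,v\}$ is not dominated by $D\setminus\{u,v\}$. So it does not lie in $D$ and has no neighbour in $D\setminus\{u,v\}$. Now let $\{a,b\}$ be an independent dominating set of $H_1$.
\begin{itemize}
\item Size: $a,b\notin D\setminus\{u,v\}$, so $|D'|=k$.
\item Domination: $D'$ dominates $H$, because every vertex not dominated by $D\setminus\{u,v\}$ is dominated by $\{u,v\}$ and hence lies in $W\cup\{u,v\}$.
\item Edges: every edge of $\langle D'\rangle$ that leaves $D\setminus\{u,v\}$ is incident with whichever of $u,v$ belongs to $\{a,b\}$. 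At most one of them does, since $u$ and $v$ are adjacent and $\{a,b\}$ is independent. All such edges were already edges of $\langle D\rangle$, while the edge $uv$ disappears.
\end{itemize}
So $\langle D'\rangle$ has strictly fewer edges than $\langle D\rangle$, contradicting the choice of $D$.

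The case $\gamma(H_1)=1$ needs no padding. If $x$ dominates $H_1$, then $(D\setminus\{u,v\})\cup\{x\}$ dominates $H$ with $k-1$ vertices, contradicting $\gamma(H)=k$.

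This is exactly the argument in the paper's proof of Lemma 1 in Section 2, where $W\cup\{u,v\}=A\cup B\cup C\cup\{u,v\}$. It works for every $k$; when $k=2$ one simply has $H_1=H$. Hence neither the split into $k\ge 3$ versus $k=2$ nor the minimality of $H$ is needed. Any induced subgraph $H$ with $\gamma(H)<i(H)$, together with a minimum dominating set $D$ having fewest edges, already yields an induced subgraph $H_1$ with $\gamma(H_1)=2<i(H_1)$.
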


\begin{thm}
[Sumner and Moore \cite{Sum1,Sum2}] 
\label{Chordal}
If $G$  is  chordal,  then $G$  is
domination perfect if and only if $G$ does not contain  an  induced  subgraph isomorphic to the graph $G_{1}$ in Figure \ref{17graphs}.
\end{thm}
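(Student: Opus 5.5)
Necessity is immediate: the graph $G_{1}$ of Figure~\ref{17graphs} is chordal and satisfies $\gamma(G_{1})=2<3=i(G_{1})$, so a domination perfect graph cannot contain it as an induced subgraph. For sufficiency I plan to use the preceding theorem of Sumner and Moore. An induced subgraph of a chordal $G_{1}$-free graph is again chordal and $G_{1}$-free. So it is enough to show the following: if $H$ is chordal, has no induced $G_{1}$, and $\gamma(H)=2$, then $i(H)=2$. This reduces everything to a local analysis around a single dominating pair.

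Throughout I will assume that $G_{1}$ is the double star: an edge $xy$ together with two nonadjacent leaves $a_1,a_2$ attached to $x$ and two nonadjacent leaves $b_1,b_2$ attached to $y$. This graph indeed has $\gamma=2$ and $i=3$, but the identification is my reading of the figure and must be checked against Figure~\ref{17graphs}. If the figure also allows dotted edges, the same argument should go through with only cosmetic changes.

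Let $\{x,y\}$ be a dominating set of $H$. If $x\nssim y$, then $\{x,y\}$ is independent and $i(H)=2$. So assume $x\ssim y$, and partition the remaining vertices into three sets:
\begin{itemize}
\item $A$, the neighbours of $x$ not adjacent to $y$;
\item $B$, the neighbours of $y$ not adjacent to $x$;
\item $C$, the common neighbours of $x$ and $y$.
\end{itemize}
Both $A$ and $B$ are nonempty; otherwise a single vertex would dominate $H$, contradicting $\gamma(H)=2$.

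The first key step uses chordality. If $a\in A$ and $b\in B$ were adjacent, then $x,a,b,y$ would induce a chordless $4$-cycle: $xb$ is a non-edge because $b\in B$, and $ay$ is a non-edge because $a\in A$. Hence there are no edges between $A$ and $B$. The second key step uses the absence of $G_{1}$. If $A$ contained two nonadjacent vertices $a_1,a_2$ and $B$ contained two nonadjacent vertices $b_1,b_2$, then $\{x,y,a_1,a_2,b_1,b_2\}$ would induce the double star $G_{1}$. This uses the first step (no $A$–$B$ edges) together with the facts $x\nssim B$ and $y\nssim A$. Therefore $A$ or $B$ is a clique; by symmetry, say $B$ is.

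Now pick any $b\in B$. Then $x\nssim b$ by the definition of $B$. The vertex $x$ dominates $y$, $A$ and $C$, while $b$ dominates the clique $B$. So $\{x,b\}$ is an independent dominating set, and $i(H)=2=\gamma(H)$, which completes the argument.

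I expect the main obstacle to be matching the argument to the exact graph $G_{1}$ drawn in Figure~\ref{17graphs}, in particular if it carries dotted (optional) edges. The place to handle this is the second step: I would check that every configuration forced there, including possible edges between $C$ and the other sets, which are irrelevant to the domination argument, is one of the graphs covered by the figure. Should $G_{1}$ turn out to be a different graph with $\gamma=2$ and $i=3$, the same scheme applies: split around a dominating pair, use chordality to kill the $A$–$B$ edges, and then extract $G_{1}$ from the remaining bad configuration.
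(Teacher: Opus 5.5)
Your proof is correct, and you read $G_1$ correctly. In Figure~\ref{17graphs} it is two copies of $P_3$ whose middle vertices are joined, with no dotted edges. That is exactly the double star you use, so the hedging in your last paragraph can be dropped.

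Your route differs from the paper's. The paper only remarks that Theorem~\ref{Chordal} follows directly from Corollary~\ref{GiTi}. The forbidden graphs $G_1$--$G_4$, $T_1$, $T_2$ there come from two induced $K_{1,3}$'s with distinct centres $x,y$ sharing the edge $xy$. In other words, they are your double star plus some set of edges between the two leaf pairs. Every one of them other than $G_1$ has a leaf-to-leaf edge $a_ib_j$, so $x a_i b_j y$ is an induced $C_4$. Hence a chordal $G_1$-free graph avoids all six graphs.

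Your chordality step is this same $C_4$ observation, applied inside a single dominating pair. The difference is what handles arbitrary $\gamma$. You use the Sumner--Moore reduction to induced subgraphs with $\gamma(H)=2$. The paper uses inequality (\ref{In1}) with $k=3$, which holds for every value of $\gamma$.

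Your version gives an explicit, elementary argument at the $\gamma=2$ level and even names the independent dominating pair $\{x,b\}$ with $B$ a clique. The cost is that the real content moves into the $\gamma=2$ reduction. Its nontrivial direction is an exchange argument of the kind used in Lemma~1 of Section~2, which takes a minimum dominating set with fewest internal edges. The paper's deduction is a one-line specialization, but it rests on the heavier Zverovich--Zverovich theorem.
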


Let 
$
{\cal A} = \{H\,:\, \vert H\vert \le 8, \;\gamma (H)=2,\; i(H)>2\}.
$

\begin{thm}
[Sumner and Moore \cite{Sum1,Sum2}] 
\label{Th4}
If $G$ contains no member of 
${\cal A}$ as an induced subgraph and also contains no induced copy of the graph $S$ in Figure \ref{Sgraph}, 
then $G$ is domination perfect.
\end{thm}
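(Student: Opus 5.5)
We argue by contradiction via a minimal counterexample. Suppose $G$ contains no member of ${\cal A}$ and no induced $S$, but is not domination perfect. By the previous theorem of Sumner and Moore, $G$ has an induced subgraph $H$ with $\gamma(H)=2$ and $i(H)>2$. The hypotheses pass to induced subgraphs, so we may replace $G$ by $H$. We may also assume $H$ is chosen with $|H|$ minimum among such subgraphs. If $|H|\le 8$ then $H\in{\cal A}$, a contradiction, so $|H|\ge 9$. We aim to find an induced subgraph on at most $8$ vertices with $\gamma=2$ and $i>2$, or else an induced copy of $S$.

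Fix a dominating pair $\{a,b\}$ of $H$. If $a\nsim b$ then $\{a,b\}$ is independent dominating and $i(H)=2$, so $a\sim b$. Partition $V(H)\setminus\{a,b\}$ into three sets: $A$, the vertices adjacent to $a$ only; $B$, those adjacent to $b$ only; and $C$, those adjacent to both. Since $i(H)>2$, the following hold for every $x$.
\begin{enumerate}
\item Every $x\in A$ fails to form an independent dominating pair with $b$. Since $x\nsim b$, some vertex of $A\cup C$ is nonadjacent to $x$; call such a vertex a \emph{witness} for $x$.
\item Symmetrically, every $x\in B$ has a witness in $B\cup C$.
\item Every $x$ in $C$, and indeed every vertex, fails to be part of any independent dominating pair. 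In particular no vertex is adjacent to all but one vertex in such a way that $\{x,y\}$ dominates with $x\nsim y$.
\end{enumerate}

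The plan is to choose a small set $W$: the vertices $a$ and $b$, at most a few vertices of $A\cup B\cup C$, and for each of them a witness. We then check whether $\langle W\rangle$ still has $\gamma=2$ via $\{a,b\}$ and $i>2$. The first property is automatic as long as $a,b\in W$. The task is to ensure that no pair of independent vertices in $W$ dominates $W$, and that no single vertex dominates $W$.

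The case analysis goes as follows. Consider first the case that $A$ (or $B$) is empty. Then $a$ alone together with some vertex tends to dominate, and we use witnesses to block the candidate pairs. For example, choose $x\in A$ with witness $y$, and choose $z\in B$ with witness $w$. The candidate independent dominating pairs inside $\{a,b,x,y,z,w\}$ are pairs such as $\{x,z\}$ and $\{x,w\}$. Each such pair that accidentally dominates $W$ must be blocked by adding one more vertex of $H$ undominated by it. Such a vertex exists because the pair does not dominate $H$. This is the key step: each added blocking vertex might itself create new independent dominating pairs within $W$, and the count must stay at most $8$.

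I expect the main obstacle to be precisely this bookkeeping. Typically one blocking vertex suffices for several pairs, and a counting argument shows we need at most two extra blocks, giving $|W|\le 8$. The configuration in which blocking needs more vertices should be rigid. There, using the minimality of $|H|$ and the adjacency constraints forced by $i>2$, I would show that the vertices involved induce exactly the graph $S$ of Figure~\ref{Sgraph}. This identification with $S$ is the delicate part. I would first try to isolate it by considering a vertex $x\in A$ whose every witness lies in $C$, and a vertex in $B$ with the same property, since the interactions through $C$ are what defeat the $8$-vertex bound.
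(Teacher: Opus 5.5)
\textbf{There is a genuine gap: the proposal stops where the proof has to begin.} Your setup is sound and matches Section~2 of the paper. You pass to a smallest induced $H$ with $\gamma(H)=2<i(H)$, take a dominating edge (your $ab$, the paper's $uv$), and split the remaining vertices into $A$, $B$, $C$. But nothing after the setup is argued. Two claims carry the whole theorem: that ``a counting argument'' shows at most two extra blocking vertices are needed, and that the configurations where this fails induce exactly $S$. Both are only announced.

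The first claim cannot hold as stated. $G_{14}$, $G_{15}$, $G_{16}$ (9 vertices) and $G_{17}$ (11 vertices) in Figure~\ref{17graphs} have $\gamma=2<i$ and are minimal domination imperfect. So none of their induced subgraphs on at most $8$ vertices has $\gamma<i$, and for these graphs your blocking process must exceed $8$ vertices whatever choices you make. More generally, a new blocking vertex can form fresh independent dominating pairs with many vertices already in $W$, so there is no a priori count.

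The substance of the theorem is that whenever the process escapes the $8$-vertex bound, an induced $S$ is forced. Proving this needs rules that pin down almost every adjacency of each newly forced vertex. In the paper these are Lemmas~2--5: two nonadjacent vertices of $A$ and two of $B$ induce $2K_2$; the neighbourhood in $B$ of a vertex of $A$ is a clique; every vertex of $C$ sees at least two vertices of such a $2K_2$; and so on. Even with these rules, the chain of forced vertices $f,g,h,k,l,m,n,p,r,s,x,y,z$ has to be followed through a long case analysis. Your proposal offers no substitute for this.

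There are also errors in the local reasoning.
\begin{itemize}
\item For $x\in A$, the pair $\{x,b\}$ already dominates $\{a\}\cup B\cup C$, so any vertex it misses lies in $A$. A ``witness'' in $C$ blocks nothing. The configuration you plan to isolate, a vertex of $A$ all of whose witnesses lie in $C$, is therefore vacuous. The correct consequence is that neither $A$ nor $B$ is a clique, which is exactly how the paper's argument begins.
\item The case ``$A$ empty'' cannot occur, since then $b$ alone dominates $H$, contradicting $\gamma(H)=2$.
\end{itemize}

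The paper only cites this theorem and does not prove it. Within the paper, the efficient route is through Theorem~\ref{ZZ}. $G_1$--$G_{13}$ have at most $8$ vertices with $\gamma=2$ and $i=3$, so they lie in ${\cal A}$. It remains to check that each of $G_{14}$--$G_{17}$ contains an induced copy of $S$. Then every ${\cal A}$-free, $S$-free graph is $\{G_1,\dots,G_{17}\}$-free and therefore domination perfect. Equivalently, you can rerun the Section~2 argument with $S$ in place of $G_{14}$--$G_{17}$.
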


Note that the list of forbidden subgraphs for this theorem in \cite{Sum2} 
contains a superfluous graph, namely the graph $S$ minus the edge connecting a vertex of degree 5 to a vertex of degree 4.

\vspace*{0.2cm}
\begin{figure}[h!] \centering\includegraphics[width=0.32\linewidth]{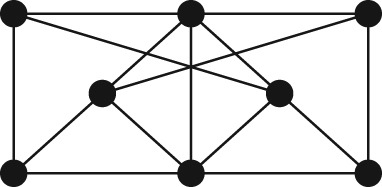} 
\caption{Graph $S$.}
\label{Sgraph} 
\end{figure}

\newpage

In addition to   chordal   graphs,   Sumner  and   Moore  \cite{Sum1,Sum2}
characterized planar domination perfect graphs:

\begin{thm}
[Sumner and Moore \cite{Sum1,Sum2}]   \label{Th5}
A planar  graph  is  domination
perfect if and only if it does not contain any graph from ${\cal A}$ as  an  induced
subgraph.
\end{thm}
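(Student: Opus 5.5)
The necessity direction is immediate. Every member of ${\cal A}$ satisfies $\gamma(H)=2<i(H)$, so a domination perfect graph cannot contain any member of ${\cal A}$ as an induced subgraph.

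For sufficiency, let $G$ be a planar graph with no induced member of ${\cal A}$. The plan is to reduce to Theorem~\ref{Th4}: it suffices to show that $G$ contains no induced copy of the graph $S$ of Figure~\ref{Sgraph}. Then Theorem~\ref{Th4} gives that $G$ is domination perfect.

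Since every induced subgraph of a planar graph is planar, it is enough to check one fact: $S$ itself is not planar. The same then holds for every graph isomorphic to $S$, so no planar $G$ can contain $S$ as an induced subgraph. Non-planarity of $S$ will be verified via Kuratowski's theorem, by exhibiting a subdivision of $K_{3,3}$ or $K_5$ inside $S$. Reading the figure, $S$ has two vertices of degree 5 and several vertices of degree 4. The natural attempt is to take the two degree-5 vertices together with one suitable further vertex as one side of a $K_{3,3}$. Three of their common or nearby neighbours form the other side, and any missing edges are replaced by paths through the remaining vertices. Failing that, an Euler-count argument may work: if $|E(S)|>3|V(S)|-6$, or $|E(S)|>2|V(S)|-4$ in case $S$ is triangle-free, then $S$ is non-planar.

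The main obstacle is this verification. It depends on the exact structure of $S$, which is given only pictorially, so the Kuratowski subgraph must be read off carefully from the figure.

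There is a fallback if $S$ turns out to be planar. In that case one must instead show directly that a minimal domination imperfect planar graph with $\gamma=2$ has at most $8$ vertices. This would follow Sumner and Moore's case analysis in the proof of Theorem~\ref{Th4}: the configuration leading to $S$ would have to be excluded using planarity, for example because it forces a $K_{3,3}$-subdivision when combined with the domination constraints. The preferred route, however, is the short reduction through the non-planarity of $S$.
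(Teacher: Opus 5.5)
\paragraph{Verdict.} Your reduction is the right one, but the proposal leaves its only non-trivial step unproved. Necessity is fine, and Theorem~\ref{Th5} follows from Theorem~\ref{Th4} precisely because $S$ is non-planar. The paper only cites Theorem~\ref{Th5} and gives no proof; this reduction is how it follows from Theorem~\ref{Th4}. However, you never show that $S$ is non-planar, and both tactics you suggest fail for the actual graph.

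\paragraph{What $S$ is.} The figure is not reproduced here, so I identify $S$ from the paper's description. In the notation of Section~2, $S\cong\langle u,a,b;v,c,d;f,g\rangle$. This is the $6$-cycle $a-c-f-b-d-g-a$ together with $u\ssim\{v,a,b,f,g\}$ and $v\ssim\{u,c,d,f,g\}$.
\begin{itemize}
\item It is the configuration shared by $G_{14}$--$G_{17}$ in Section~2, which is why Theorem~\ref{T-V} can replace those four graphs by $S$.
\item Its degree sequence is $5,5,4,4,3,3,3,3$, with degree-$5$ vertices adjacent to degree-$4$ vertices, matching the paper's description.
\item It satisfies $i(S)=2$ via $\{f,g\}$, so $S\notin{\cal A}$.
\end{itemize}

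\paragraph{Why your tactics fail.}
\begin{itemize}
\item The Euler count gives nothing: $S$ has $8$ vertices and $15\le 3\cdot 8-6$ edges, and it contains the triangle $uvf$.
\item Your ``natural attempt'' cannot work. Suppose a $K_{3,3}$-subdivision put the two degree-$5$ vertices $u,v$ in the same part. The third branch vertex of that part would need three neighbours outside $\{u,v\}$. But every other vertex has exactly two: $a$ has $c,g$; $b$ has $d,f$; $c$ has $a,f$; $d$ has $b,g$; $f$ has $b,c$; $g$ has $a,d$.
\item A $K_5$-subdivision is impossible too, since only $u,v,f,g$ have degree at least $4$.
\end{itemize}

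\paragraph{The missing certificate.} Put $u$ and $v$ on opposite sides: take the parts $\{u,c,d\}$ and $\{v,a,b\}$.
\begin{itemize}
\item Seven of the nine required connections are edges of $S$: $uv,ua,ub,cv,ca,dv,db$.
\item The remaining two are the paths $c-f-b$ and $d-g-a$.
\item These paths are internally disjoint, and $f,g$ are not branch vertices.
\end{itemize}
So $S$ contains a subdivision of $K_{3,3}$ and is non-planar. Since induced subgraphs of planar graphs are planar, no planar graph contains $S$ as an induced subgraph, and Theorem~\ref{Th4} completes the sufficiency. With this in place your fallback paragraph is unnecessary; as written it is too vague to count as an argument anyway.
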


Bollob\'as and Cockayne \cite{Bol} generalized the result of  Allan
and Laskar (Theorem~\ref{A-L}) as follows:

\begin{thm}
[Bollob\'as and Cockayne \cite{Bol}] 
If $G$ has no induced  subgraph
isomorphic to $K_{1,k}$ $(k\ge 3)$, then
\begin{equation}
\label{In1}
i(G) \le  \gamma (G)(k-2) - (k-3).
\end{equation}
\end{thm}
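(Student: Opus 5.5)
Let $D$ be a minimum dominating set of $G$, so $|D|=\gamma(G)=\gamma$. The plan is to turn $D$ into a maximal independent set by replacing parts of it, while controlling how much it grows. Let $I$ be a maximal independent set of the induced subgraph $\langle D\rangle$. Every vertex of $D\setminus I$ has a neighbor in $I$. The set $I$ need not dominate $G$. Let $U$ be the set of vertices of $V(G)\setminus N[I]$, i.e., those not dominated by $I$. Every such $u$ is dominated by $D\setminus I$. We extend $I$ greedily by a maximal independent set $J$ of $\langle U\rangle$. Then $I\cup J$ is independent, because vertices of $U$ have no neighbor in $I$. It is also dominating, so
\[ i(G)\le |I|+|J|. \]
It therefore suffices to show $|I|+|J|\le (k-2)\gamma-(k-3)$.

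For the count, assign each vertex $u\in J$ to some $d\in D\setminus I$ adjacent to it; note $d\notin U$. Fix $d\in D\setminus I$ and let $J_d$ be the vertices of $J$ assigned to $d$. Then $J_d\cup\{x\}$, where $x\in I$ is a neighbor of $d$, is an independent set of neighbors of $d$. Indeed, $x\not\sim J$ since $J\subseteq U$. Since $G$ is $K_{1,k}$-free, $|J_d|+1\le k-1$, so $|J_d|\le k-2$. Writing $s=|D\setminus I|$, this gives
\[ i(G)\le |I|+(k-2)s = \gamma+(k-3)s. \]
This matches the required bound only when $s\le \gamma-1$, which holds since $I\neq\emptyset$ when $\gamma\ge1$. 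Then $\gamma+(k-3)(\gamma-1)=(k-2)\gamma-(k-3)$, as desired.

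So the key steps, in order, are:
\begin{enumerate}
\item choose $I$ maximal independent in $\langle D\rangle$;
\item extend $I$ by a maximal independent $J\subseteq U$;
\item charge each vertex of $J$ to a dominator in $D\setminus I$;
\item use the extra independent neighbor in $I$ together with claw-freeness to get the bound $k-2$ per dominator;
\item use $|I|\ge1$ to finish.
\end{enumerate}

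The step I expect to need the most care is the charging in step~4. One must check that the neighbor $x\in I$ of $d$ is nonadjacent to every vertex of $J_d$, which holds because $J\subseteq U$ lies outside $N[I]$. One must also check that $d$ itself is not in $U$, which holds since $d$ is adjacent to $I$. The edge case $\gamma=1$ needs a separate remark: then $D=I$ and $i(G)=1$. If all of this goes through, the naive estimate of $k-1$ per dominator improves to $k-2$, which yields exactly the stated bound.
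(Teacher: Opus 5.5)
Your proof is correct and complete. The paper gives no proof of this theorem; it only quotes it from Bollob\'as and Cockayne. Your argument is essentially their original one:
\begin{itemize}
\item take a maximal independent subset $I$ of a minimum dominating set $D$;
\item extend $I$ by a maximal independent set $J$ of the vertices not dominated by $I$;
\item note that each $d\in D\setminus I$ is adjacent to at most $k-2$ vertices of $J$, because $d$ also has a neighbour in $I$ that has no neighbour in $J$.
\end{itemize}
This yields $i(G)\le (k-2)\gamma(G)-(k-3)|I|$, and the bound follows from $|I|\ge 1$.

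Two small remarks. First, the step $(k-3)s\le (k-3)(\gamma-1)$ is where the hypothesis $k\ge 3$ is used, and you should say so explicitly. Second, the case $\gamma=1$ needs no separate treatment, since then $s=0=\gamma-1$.
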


It was proved in \cite{Zve0} 
that inequality (\ref{In1})
in fact holds
for a wider class  of  graphs:

\begin{thm}
[Zverovich and Zverovich \cite{Zve0}]
Suppose $G$ does not  contain
two induced subgraphs $K_{1,k}$ $(k\ge 3)$ having  different  centers and exactly one edge in common. 
Then inequality (\ref{In1}) holds.
\end{thm}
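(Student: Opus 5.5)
The plan is to follow the Bollob\'as--Cockayne strategy, but to use the weaker hypothesis only at the single point where an induced star is actually needed. Let $D$ be a minimum dominating set, $|D|=\gamma=\gamma(G)$. Among all minimum dominating sets choose $D$ so that $\langle D\rangle$ has the fewest edges. Set $I_0$ to be a maximal independent subset of $D$. We then extend $I_0$ greedily to a maximal independent set $I$ of $G$, choosing new vertices only among the vertices left undominated by $I_0$. Every such vertex is a private-type neighbour of some vertex of $D\setminus I_0$. The aim is to show that each $x\in D\setminus I_0$ contributes at most $k-2$ vertices to $I\setminus I_0$. Since $|D\setminus I_0|\le \gamma-1$ whenever $\langle D\rangle$ has an edge, this gives
\[
|I|\le |I_0| + (k-2)|D\setminus I_0| \le \gamma(k-2)-(k-3).
\]
If $D$ is independent, then already $i(G)=\gamma$ and (\ref{In1}) holds trivially because $k\ge3$.

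For the local bound, fix $x\in D\setminus I_0$ and let $J_x$ be the vertices of $I\setminus I_0$ assigned to $x$. These are neighbours of $x$ not dominated by $I_0$, and $J_x$ is independent. Since $x\notin I_0$ and $I_0$ is maximal in $D$, $x$ has a neighbour $y\in I_0$, and $y\nssim J_x$. Suppose $|J_x|\ge k-1$. Then $x$ together with $y$ and $k-1$ vertices of $J_x$ spans an induced $K_{1,k}$ centred at $x$.

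To obtain the forbidden configuration, I would use the minimality of $D$ to produce a second induced $K_{1,k}$ centred at $y$ that shares exactly the edge $xy$ with the first. The idea is that $y$ must have enough neighbours outside $N[x]$ that are not dominated by $D\setminus\{y\}$. Otherwise we could replace $y$, or swap $x$ out for a vertex of $J_x$, and obtain a minimum dominating set with fewer internal edges, contradicting the choice of $D$. Taking $k-1$ such $y$-private neighbours, together with $x$, should give an independent set, provided we choose them nonadjacent to $x$ and to each other; independence then yields the second star. The two stars have different centres $x$ and $y$, and share exactly the edge $xy$, because the leaves on each side avoid the other star's closed neighbourhood. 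This contradicts the hypothesis, so $|J_x|\le k-2$.

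The main obstacle is the exchange step: extracting $k-1$ pairwise independent private neighbours of $y$. A single private neighbour is easy, but independence among $k-1$ of them is not. I would handle this by induction on $\gamma$. If the private neighbourhood of $y$ contains no independent $(k-1)$-set, replace $D$ by $(D\setminus\{x\})\cup J_x$-type modifications. Alternatively, reassign vertices so that $y$ is placed outside $I_0$ instead of $x$, and rerun the counting. In either case the edge-minimality of $\langle D\rangle$ should rule out the bad situation. If this fails, the fallback is to choose $I_0$ and $D$ lexicographically: first minimise the edges of $\langle D\rangle$, then maximise $|I_0|$. The extra leverage from the second criterion is what I would use in the exchange argument.
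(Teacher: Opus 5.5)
The gap is the local claim $|J_x|\le k-2$. For an arbitrary maximal independent $I_0\subseteq D$ it is false, even with $D$ edge-minimal and $|I_0|$ maximum, so no exchange argument can prove it.

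Take $k=4$ and let $G$ be an edge $xy$ with three pendant vertices $a_1,a_2,a_3$ at $x$ and two pendant vertices $b_1,b_2$ at $y$. Only $x$ is the centre of an induced $K_{1,4}$, so the hypothesis holds. Also $\{x,y\}$ is the unique minimum dominating set. With $I_0=\{y\}$ you get $J_x=\{a_1,a_2,a_3\}$, of size $k-1$. But $y$ has only two neighbours outside $N[x]$, so there is no second star, and no replacement of $x$ or $y$ gives another minimum dominating set. The resulting set $\{y,a_1,a_2,a_3\}$ has size $4>3=(k-2)\gamma-(k-3)$.

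So the hypothesis never turns $|J_x|\ge k-1$ into a contradiction. It only tells you about the other endpoint: if $\alpha(N(x)\setminus N[y])\ge k-1$ (call $x$ \emph{heavy} towards $y$), then $y$ is not heavy towards $x$. Everything therefore hinges on choosing $I_0$ and the assignment, and your fallbacks (maximising $|I_0|$, moving $y$ out of $I_0$) are not arguments. Requiring every $x\in D\setminus I_0$ to have an $I_0$-neighbour towards which it is not heavy is a kernel-type condition. It fails, for instance, when $\langle D\rangle$ is a triangle whose edges are heavy in cyclic order.

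The missing idea is to let the witness $y$ be any earlier-processed vertex of $D$, not necessarily one in $I_0$. The construction is as follows.
\begin{enumerate}
\item On each edge $xy$ of $\langle D\rangle$ put an arc $y\to x$ whenever $x$ is not heavy towards $y$. By the hypothesis every edge gets at least one arc. Hence distinct source strong components are non-adjacent, since an edge between two of them would carry an arc entering one.
\item Pick one vertex from each source component and extend to a maximal independent set $I_0$ of $\langle D\rangle$. Every vertex of $D$ is then reachable from $I_0$. So you can order $D\setminus I_0$ as $x_1,\dots,x_m$ with each $x_j$ having an in-neighbour $y_j\in I_0\cup\{x_1,\dots,x_{j-1}\}$.
\item Let $T$ be a maximal independent set of $\langle V(G)\setminus N[I_0]\rangle$; it is disjoint from $D$. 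Charge each $t\in T$ to the first $x_j$ adjacent to it.
\end{enumerate}
The vertices charged to $x_j$ avoid $N[y_j]$. If $y_j\in I_0$ this is because $T$ misses $N[I_0]$; if $y_j=x_i$ with $i<j$ it is because they were not charged to $x_i$. So they form an independent subset of $N(x_j)\setminus N[y_j]$, and there are at most $k-2$ of them. Your count then gives $i(G)\le |I_0|+(k-2)(\gamma-|I_0|)\le (k-2)\gamma-(k-3)$. Neither edge-minimality of $D$ nor a second-star construction is needed; the hypothesis is used only to ensure every edge of $\langle D\rangle$ is oriented.
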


For $k=3,$ we obtain the following result, which provides the
first four minimal domination imperfect graphs of order six ($G_{1}-G_{4}$ in Figure \ref{17graphs}).

\begin{cor}
[Zverovich and Zverovich \cite{Zve0}]
\label{GiTi}
If a graph $G$ does not contain the graphs $G_{1}-G_{4}$  in
Figure \ref{17graphs}
and the graphs $T_{1},T_{2}$ in Figure \ref{T1T2} as induced subgraphs, then $G$  is
domination perfect.
\end{cor}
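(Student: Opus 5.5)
The plan is to derive this from the preceding theorem of Zverovich and Zverovich with $k=3$. The argument has three steps: specialize the inequality, pass to induced subgraphs, and show that the forbidden configuration forces one of the six listed graphs.

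For $k=3$, inequality (\ref{In1}) becomes $i(G)\le \gamma(G)$. Since every independent dominating set is dominating, $\gamma(G)\le i(G)$ always holds, so the hypothesis of that theorem yields $\gamma(G)=i(G)$. Domination perfection requires this equality for every induced subgraph $H$ of $G$. Forbidding $G_1$--$G_4,T_1,T_2$ as induced subgraphs is a hereditary condition, so it suffices to prove the following statement: if a graph contains two induced copies of $K_{1,3}$ with different centers and exactly one common edge, then it contains one of $G_1$--$G_4,T_1,T_2$ as an induced subgraph. Applying the theorem to each $H$ then finishes the proof.

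To prove that statement, let the two claws have centers $u\neq v$. The common edge must join the two centers, since each edge of a claw contains its center; hence the common edge is $uv$. Thus the first claw is $u;v,a_1,a_2$ and the second is $v;u,b_1,b_2$. Because the leaves of the first claw are independent, $a_1,a_2\nssim v$ and $a_1a_2\notin E$. Symmetrically, $b_1,b_2\nssim u$ and $b_1b_2\notin E$. Since every $b_j$ is adjacent to $v$, we get $a_i\neq b_j$, so the six vertices $u,v,a_1,a_2,b_1,b_2$ are distinct. The claws then share exactly the edge $uv$, as required.

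In the induced subgraph $\langle\{u,v,a_1,a_2,b_1,b_2\}\rangle$, every adjacency is forced except the four possible edges $a_ib_j$. Up to the symmetries swapping $a_1\leftrightarrow a_2$, swapping $b_1\leftrightarrow b_2$, and exchanging the roles of $(u,\{a_i\})$ and $(v,\{b_j\})$, the bipartite graph between $\{a_1,a_2\}$ and $\{b_1,b_2\}$ falls into exactly six classes:
\begin{itemize}
\item no edges;
\item one edge;
\item two disjoint edges;
\item two edges sharing a vertex;
\item three edges;
\item four edges.
\end{itemize}
The final step is to check these six resulting graphs against Figures \ref{17graphs} and \ref{T1T2} and identify them with $G_1$--$G_4$, $T_1$ and $T_2$. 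I expect this identification with the figures to be the only delicate point; the rest is routine case bookkeeping.
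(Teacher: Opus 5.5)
Your proposal is correct and follows the paper's route: the paper gives no separate proof and obtains the corollary by specializing the Zverovich--Zverovich theorem to $k=3$, which combined with $\gamma\le i$ and heredity is exactly your argument. The figure identification you left open goes through: in Figure~\ref{17graphs}, $G_1$--$G_4$ are your cases with no cross edge, one cross edge, two cross edges at a common vertex, and all four cross edges. So $T_1,T_2$ must be the perfect-matching and three-edge cases; both have $i=2$, which explains why they are not among the minimal domination imperfect graphs.
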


\vspace*{0.1cm}
\begin{figure}[h!] \centering\includegraphics[width=0.4\linewidth]{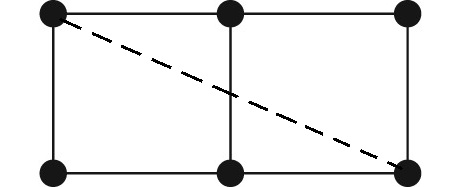} 
\caption{Graphs $T_1$ and $T_2$.}
\label{T1T2} 
\end{figure}

Note that Theorems \ref{A-L} and 
\ref{Chordal} follow directly from Corollary \ref{GiTi}.
It can be shown that 
a triangle-free graph is
domination perfect if and only if 
it contains no induced subgraphs isomorphic to
$G_{1}-G_{4}$ \cite{Zve0}.
Fulman \cite{Ful} gave a sufficient condition for a graph to be domination perfect in terms of eight forbidden graphs (see Theorem~\ref{Ful}).

Topp  and  Volkmann  \cite{Top} identified  the  13
minimal domination imperfect graphs among the graphs in the
family ${\cal A}$, which is used in  Theorems \ref{Th4} and \ref{Th5}.

\begin{thm}
[Topp and Volkmann \cite{Top}] 
\label{T-V}
If a graph $G$  does  not  contain
any of the graphs $G_{1}-G_{13}$ in Figure \ref{17graphs} and 
the graph $S$ in  
Figure \ref{Sgraph}  as  an
induced subgraph, then $G$ is domination perfect.
\end{thm}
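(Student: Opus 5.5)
The natural route is to reduce to Theorem~\ref{Th4} of Sumner and Moore. That theorem already states that a graph with no induced member of ${\cal A}$ and no induced copy of $S$ is domination perfect. So it suffices to show that every graph in ${\cal A}$ contains one of $G_1$--$G_{13}$ as an induced subgraph. Once this is done, suppose $G$ contains none of $G_1$--$G_{13}$ and not $S$. Then $G$ contains no member of ${\cal A}$, since any such member would carry an induced $G_j$ that is also induced in $G$. Theorem~\ref{Th4} then gives that $G$ is domination perfect.

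To prove the reduction, take $H\in{\cal A}$ and let $H'$ be an induced subgraph of $H$ that is minimal with respect to $\gamma(H')=2<i(H')$. Such an $H'$ exists because $H$ itself qualifies. By minimality, every proper induced subgraph $F$ of $H'$ with $\gamma(F)=2$ satisfies $i(F)=2$. Combined with the Sumner--Moore theorem that domination perfection only needs checking on induced subgraphs with $\gamma=2$, this should make $H'$ minimal domination imperfect. One caveat: we must also ensure that no proper induced subgraph with $\gamma\ge 3$ has $\gamma<i$. This follows because such a subgraph would itself contain an induced $F$ with $\gamma(F)=2<i(F)$, by the same Sumner--Moore reduction, and that would contradict minimality. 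Since $|H'|\le 8$, it remains to classify all minimal domination imperfect graphs on at most $8$ vertices with $\gamma=2$, and to show they are exactly $G_1$--$G_{13}$, or at least that each of them is one of these.

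The classification goes as follows. Let $\{a,b\}$ be a dominating pair of $H'$. Since $i(H')>2$, the pair $\{a,b\}$ is not independent, or it fails to extend; more precisely, no independent dominating pair exists. Split on whether $a\ssim b$ or $a\nssim b$. For each vertex $x$, the pair $\{a,x\}$ or $\{x,b\}$ must fail to be an independent dominating set, so some vertex witnesses the failure. Collecting these witnesses forces a small structure. Minimality then forces $V(H')$ to be $\{a,b\}$ together with private neighbours and witnesses, bounding the orders between $6$ and $8$. Corollary~\ref{GiTi} handles the cases where $H'$ contains an induced $T_1$ or $T_2$ pattern reducing to $G_1$--$G_4$. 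The remaining configurations are then enumerated, and the last step checks that each resulting graph is one of $G_1$--$G_{13}$.

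I expect the main obstacle to be this exhaustive case analysis on at most 8 vertices: it must be ensured that no minimal domination imperfect graph of order $7$ or $8$ is missed. If the hand analysis becomes unwieldy, I would fall back on the finite nature of ${\cal A}$ and verify the reduction by a computer enumeration of graphs on at most $8$ vertices.
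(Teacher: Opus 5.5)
Your reduction is the right one. Theorem~\ref{Th4}, together with the fact that every member of ${\cal A}$ contains one of $G_1$--$G_{13}$, is exactly how the theorem arises; the paper credits Topp and Volkmann with identifying $G_1$--$G_{13}$ as the minimal domination imperfect graphs in ${\cal A}$. Your argument that a minimal $H'$ with $\gamma(H')=2<i(H')$ is minimal domination imperfect is also correct. The gap is that this identification is the whole content of the theorem. Your third paragraph only describes a procedure and defers the case analysis, or a computer search, without carrying it out.

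The outline is also inaccurate in several places:
\begin{itemize}
\item Since $\{a,b\}$ dominates $H'$ and $i(H')>2$, you must have $a\ssim b$, so the case $a\nssim b$ is empty.
\item Corollary~\ref{GiTi} is used backwards. It disposes of the minimal domination imperfect graphs containing neither $T_1$ nor $T_2$, since these must be among $G_1$--$G_4$. The graphs containing $T_1$ or $T_2$ are precisely what remains to be enumerated.
\item No witness argument of the kind you describe can bound the order by $8$. The graphs $G_{14}$--$G_{16}$ (nine vertices) and $G_{17}$ (eleven vertices) are minimal domination imperfect graphs with $\gamma=2$. So the bound has to come from $H\in{\cal A}$, and showing that every configuration on at most eight vertices is one of the thirteen graphs is exactly the unproved step.
\end{itemize}

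Within this paper the missing step costs one line, because Theorem~\ref{ZZ} is proved in Section~2 without using Theorem~\ref{Th4} or Theorem~\ref{T-V}. A graph $H\in{\cal A}$ has $\gamma(H)=2<i(H)$, so it is not domination perfect. By Theorem~\ref{ZZ} it therefore contains an induced $G_j$. Since $|H|\le 8$, while $G_{14},G_{15},G_{16}$ have nine vertices and $G_{17}$ has eleven, we get $j\le 13$. Theorem~\ref{Th4} then gives the result, and your minimality argument is not even needed. Without Theorem~\ref{ZZ}, you would have to perform the exhaustive classification of minimal domination imperfect graphs on at most eight vertices, by hand or by computer; that classification is Topp and Volkmann's contribution.
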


Sumner \cite{Sum2} suggested that it is impossible to provide a finite forbidden-subgraph characterization for the entire class of domination perfect graphs. Nevertheless, in 1995, the following characterization in terms of 17 forbidden subgraphs was proposed:

\begin{thm}[Zverovich and Zverovich \cite{Zve1}]
\label{ZZ}
A graph $G$ is domination perfect if  and  only  if $G$
does not contain any of the graphs $G_{1}-G_{17}$  in  Figure  \ref{17graphs}  as  an
induced subgraph.
\end{thm}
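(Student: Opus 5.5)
Necessity is immediate: each of $G_1$--$G_{17}$ is (to be checked directly) a graph with $\gamma=2$ and $i>2$, so a domination perfect graph contains none of them. For sufficiency I would argue by contradiction from a minimal counterexample. Take a graph $G$ containing none of $G_1$--$G_{17}$ that is not domination perfect, and pass to a minimal domination imperfect induced subgraph, still called $G$. By the theorem of Sumner and Moore we may assume $\gamma(G)=2$ and $i(G)>2$. So there is a dominating pair $\{a,b\}$, and by minimality no independent dominating set of size $2$ exists. The case $a\ssim b$ is the one to treat; if $a\nssim b$ then $\{a,b\}$ is itself an independent dominating set. I expect the argument to reduce to this adjacent case, possibly after choosing the pair to maximize some parameter.

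The structure I would exploit is the partition of $V(G)\setminus\{a,b\}$ into
\[ A=N(a)\setminus N[b],\qquad B=N(b)\setminus N[a],\qquad C=N(a)\cap N(b). \]
Since $\{a,b\}$ cannot be replaced by $\{a,y\}$ for any $y\in B$, every $y\in B$ fails to dominate $B$, and symmetrically every $x\in A$ fails to dominate $A$. Hence $A,B$ are nonempty and neither $A$ nor $B$ has a dominating vertex. Minimality of $G$ is the second tool: deleting any vertex $v$ gives $\gamma(G-v)=i(G-v)$. Applied to a $v$ with $\gamma(G-v)=2$ (for instance $v\in C$, or a suitable vertex of $A$), this produces an independent dominating pair $\{u,w\}$ of $G-v$ that fails in $G$ only because it misses $v$. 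This yields many \lq\lq witness'' configurations, each consisting of small vertex sets with prescribed adjacencies.

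Using these, I would build induced subgraphs on at most about $8$--$10$ vertices: $a,b$, two or three vertices from $A$ and from $B$ with their non-domination witnesses, and possibly a vertex of $C$. Then I would do a case analysis on the edges between $A$ and $B$ and on whether $C$ is empty or forces extra structure. If $C=\emptyset$ and the witnesses interact little, we should land on the order-six graphs $G_1$--$G_4$ (the claw-like configurations behind Corollary~\ref{GiTi}). Richer interactions should give $G_5$--$G_{13}$, which by Theorem~\ref{T-V} are exactly the minimal members of ${\cal A}$. The remaining cases, where Theorem~\ref{T-V} leaves the graph $S$ as an obstruction, must be refined further to produce $G_{14}$--$G_{17}$. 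A cleaner route is therefore to start from Theorem~\ref{T-V}: suppose $G$ contains an induced $S$ but none of $G_1$--$G_{17}$. Then use minimality to show that the additional vertices needed to kill the independent dominating pairs inside $S$ must form one of $G_{14}$--$G_{17}$, or else contain some $G_j$ with $j\le 13$.

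The main obstacle is this last step: controlling how vertices outside an induced copy of $S$ attach to it. The number of attachment patterns is large. I would first fix the vertices of $S$ playing the roles of $a,b$ and show that every other vertex of $G$ falls into $A$, $B$ or $C$ relative to them. Then, for each independent pair of $S$ that dominates $S$, I would take a vertex $v$ it misses and classify $v$ by its neighbourhood in $S$. Each class should either create a forbidden $G_j$ with $j\le 13$ on few vertices or extend $S$ to one of $G_{14}$--$G_{17}$, with the claw-freeness-type conditions from Corollary~\ref{GiTi} pruning most cases early.
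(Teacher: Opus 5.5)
\textbf{There is a genuine gap.} Your setup matches the paper's. In a minimal counterexample $\gamma=2$, every dominating pair is an edge $ab$, and $V=A\cup B\cup C\cup\{a,b\}$. The driving fact is that every independent pair misses some vertex; your remark that no vertex of $A$ dominates $A$ is one instance of it.

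From there on, though, you give a plan rather than a proof. Phrases like ``should land on'', ``should give'' and ``must be refined further'' stand in for the entire content of the theorem, and the step you call the main obstacle is never carried out. The real difficulty is showing that the process of adding witness vertices \emph{terminates}; Sumner expected no finite characterization to exist. Nothing in your proposal controls the branching.

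The missing idea is a set of structural lemmas that pin down how each new witness attaches to the current configuration:
\begin{itemize}
\item for nonadjacent $a\in A$, $c\in B$, the pair $\{a,c\}$ dominates $\langle A\cup B\rangle$ (else $G_1$--$G_3$);
\item for nonadjacent $a,b\in A$ and nonadjacent $c,d\in B$, $\langle a,b,c,d\rangle\simeq 2K_2$ (via $G_2,G_3,G_4,G_6$);
\item $N(a)\cap B$ is a clique;
\item every $f\in C$ is adjacent to at least two of such $a,b,c,d$ (via $G_5$ and $G_7$--$G_{12}$).
\end{itemize}
These force a $P_5$, and then a $C_6$ on $a,c,f,b,d,g$. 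They also give Lemma~5: any vertex missing both $f$ and $g$ lies in $A\cup B$ with prescribed adjacencies (via $G_3,G_{14},G_{15},G_{16}$). After that, each new witness $h,k,l,m,n,p,r,s,x,y,z$ has its neighbourhood essentially forced, and the chain ends in a contradiction. The graphs $G_{13}$--$G_{17}$ arise along this chain. This goes well beyond your ``8--10 vertices'': $G_{17}$ alone has 11, and the configuration before the last contradiction has 19.

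Your alternative route through Theorem~\ref{T-V} has a further concrete defect. As a black box, it only tells you that $G$ contains \emph{some} induced copy of $S$. It says nothing about where that copy sits relative to a dominating edge of $G$. Your step ``fix the vertices of $S$ playing the roles of $a,b$ and show that every other vertex falls into $A$, $B$ or $C$'' needs a dominating pair of that copy to dominate all of $G$, and nothing you have proved ensures this. Getting such positional control would mean redoing the Topp--Volkmann argument inside the $A,B,C$ framework, which is essentially what Lemmas~2--6 of the paper do.

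Your vertex-deletion tool is valid: for $v\notin\{a,b\}$ it yields an independent pair missing exactly $v$. The paper does not need it. It uses minimality only to get $V(F)=A\cup B\cup C\cup\{u,v\}$, and otherwise relies on $i(F)>2$ applied to well-chosen independent pairs.
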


\vspace{-0.3cm}

\begin{figure}[b!]
\begin{large}
\setlength{\unitlength}{1cm}
\thicklines
\hspace*{-1cm}
\begin{picture}(17,17)

\put(1,15.5){\circle*{0.2}}
\put(1,17){\circle*{0.2}}
\put(2.5,15.5){\circle*{0.2}}
\put(2.5,17){\circle*{0.2}}
\put(4,15.5){\circle*{0.2}}
\put(4,17){\circle*{0.2}}

\put(5,15.5){\circle*{0.2}}
\put(5,17){\circle*{0.2}}
\put(6.5,15.5){\circle*{0.2}}
\put(6.5,17){\circle*{0.2}}
\put(8,15.5){\circle*{0.2}}
\put(8,17){\circle*{0.2}}

\put(9,15.5){\circle*{0.2}}
\put(9,17){\circle*{0.2}}
\put(10.5,15.5){\circle*{0.2}}
\put(10.5,17){\circle*{0.2}}
\put(12,15.5){\circle*{0.2}}
\put(12,17){\circle*{0.2}}

\put(13,15.5){\circle*{0.2}}
\put(13,17){\circle*{0.2}}
\put(14.5,15.5){\circle*{0.2}}
\put(14.5,17){\circle*{0.2}}
\put(16,15.5){\circle*{0.2}}
\put(16,17){\circle*{0.2}}

\drawline(1,15.5)(4,15.5)(2.5,15.5)(2.5,17)(1,17)(4,17)

\drawline(5,17)(5,15.5)(8,15.5)(6.5,15.5)(6.5,17)(5,17)(8,17)

\drawline(12,15.5)(9,17)(9,15.5)(12,15.5)(10.5,15.5)(10.5,17)(9,17)(12,17)

\drawline(13,15.5)(16,17)(16,15.5)(13,17)(13,15.5)(16,15.5)(14.5,15.5)(14.5,17)(13,17)(16,17)

\put(1,12.5){\circle*{0.2}}
\put(1,14){\circle*{0.2}}
\put(2.5,12.5){\circle*{0.2}}
\put(2.5,14){\circle*{0.2}}
\put(4,12.5){\circle*{0.2}}
\put(4,14){\circle*{0.2}}
\put(3.25,13.25){\circle*{0.2}}

\put(5,12.5){\circle*{0.2}}
\put(5,14){\circle*{0.2}}
\put(6.5,12.5){\circle*{0.2}}
\put(6.5,14){\circle*{0.2}}
\put(8,12.5){\circle*{0.2}}
\put(8,14){\circle*{0.2}}
\put(7.25,13.25){\circle*{0.2}}

\put(9,12.5){\circle*{0.2}}
\put(9,14){\circle*{0.2}}
\put(10.5,12.5){\circle*{0.2}}
\put(10.5,14){\circle*{0.2}}
\put(12,12.5){\circle*{0.2}}
\put(12,14){\circle*{0.2}}
\put(9.75,13.25){\circle*{0.2}}
\put(11.25,13.25){\circle*{0.2}}

\put(13,12.5){\circle*{0.2}}
\put(13,14){\circle*{0.2}}
\put(14.5,12.5){\circle*{0.2}}
\put(14.5,14){\circle*{0.2}}
\put(16,12.5){\circle*{0.2}}
\put(16,14){\circle*{0.2}}
\put(13.75,13.25){\circle*{0.2}}
\put(15.25,13.25){\circle*{0.2}}

\drawline(1,12.5)(1,14)(4,14)(4,12.5)(1,12.5)
\drawline(2.5,12.5)(2.5,14)(3.25,13.25)(2.5,12.5)

\drawline(5,12.5)(5,14)(8,14)(8,12.5)(5,12.5)
\drawline(6.5,12.5)(6.5,14)(8,12.5)(5,14)(7.25,13.25)(6.5,12.5)

\drawline(9,12.5)(9,14)(12,14)(12,12.5)(9,12.5)
\drawline(12,12.5)(10.5,14)(9.75,13.25)(12,14)
\drawline(9.75,13.25)(10.5,12.5)(10.5,14)(10.5,12.5)(11.25,13.25)

\drawline(13,12.5)(13,14)(16,14)(16,12.5)(13,12.5)
\drawline(16,12.5)(14.5,14)(13,12.5)
\drawline(13.75,13.25)(14.5,12.5)(14.5,14)(14.5,12.5)(15.25,13.25)

\put(1,9.5){\circle*{0.2}}
\put(1,11){\circle*{0.2}}
\put(2.5,9.5){\circle*{0.2}}
\put(2.5,11){\circle*{0.2}}
\put(4,9.5){\circle*{0.2}}
\put(4,11){\circle*{0.2}}
\put(1.75,10.25){\circle*{0.2}}
\put(3.25,10.25){\circle*{0.2}}

\put(5,9.5){\circle*{0.2}}
\put(5,11){\circle*{0.2}}
\put(6.5,9.5){\circle*{0.2}}
\put(6.5,11){\circle*{0.2}}
\put(8,9.5){\circle*{0.2}}
\put(8,11){\circle*{0.2}}
\put(5.75,10.25){\circle*{0.2}}
\put(7.25,10.25){\circle*{0.2}}

\put(9,9.5){\circle*{0.2}}
\put(9,11){\circle*{0.2}}
\put(10.5,9.5){\circle*{0.2}}
\put(10.5,11){\circle*{0.2}}
\put(12,9.5){\circle*{0.2}}
\put(12,11){\circle*{0.2}}
\put(9.75,10.25){\circle*{0.2}}
\put(11.25,10.25){\circle*{0.2}}

\put(13,9.5){\circle*{0.2}}
\put(13,11){\circle*{0.2}}
\put(14.5,9.5){\circle*{0.2}}
\put(14.5,11){\circle*{0.2}}
\put(16,9.5){\circle*{0.2}}
\put(16,11){\circle*{0.2}}
\put(13.75,10.25){\circle*{0.2}}
\put(15.25,10.25){\circle*{0.2}}

\drawline(1,9.5)(1,11)(4,11)(4,9.5)(1,9.5)
\drawline(4,9.5)(2.5,11)(1,9.5)
\drawline(4,11)(1.75,10.25)(2.5,9.5)(2.5,11)(2.5,9.5)(3.25,10.25)

\drawline(5,9.5)(5,11)(8,11)(8,9.5)(5,9.5)
\drawline(8,9.5)(6.5,11)(5.75,10.25)(8,11)
\drawline(5.75,10.25)(6.5,9.5)(6.5,11)(6.5,9.5)(7.25,10.25)(5.75,10.25)

\drawline(9,9.5)(9,11)(12,11)(12,9.5)(9,9.5)
\drawline(12,9.5)(10.5,11)(9,9.5)
\drawline(9.75,10.25)(10.5,9.5)(10.5,11)(10.5,9.5)(11.25,10.25)(9.75,10.25)

\drawline(13,9.5)(13,11)(16,11)(16,9.5)(13,9.5)
\drawline(16,9.5)(14.5,11)(13,9.5)
\drawline(13.75,10.25)(14.5,9.5)(14.5,11)(14.5,9.5)(15.25,10.25)(13.75,10.25)(16,11)

\put(1,6){\circle*{0.2}}
\put(1,7.5){\circle*{0.2}}
\put(2.5,6){\circle*{0.2}}
\put(2.5,7.5){\circle*{0.2}}
\put(4,6){\circle*{0.2}}
\put(4,7.5){\circle*{0.2}}
\put(1.75,6.75){\circle*{0.2}}
\put(3.25,6.75){\circle*{0.2}}

\put(5,6){\circle*{0.2}}
\put(5,7.5){\circle*{0.2}}
\put(6.5,6){\circle*{0.2}}
\put(6.5,7.5){\circle*{0.2}}
\put(8,6){\circle*{0.2}}
\put(8,7.5){\circle*{0.2}}
\put(5.75,6.75){\circle*{0.2}}
\put(7.25,6.75){\circle*{0.2}}
\put(5.75,8.5){\circle*{0.2}}

\put(9,6){\circle*{0.2}}
\put(9,7.5){\circle*{0.2}}
\put(10.5,6){\circle*{0.2}}
\put(10.5,7.5){\circle*{0.2}}
\put(12,6){\circle*{0.2}}
\put(12,7.5){\circle*{0.2}}
\put(9.75,6.75){\circle*{0.2}}
\put(11.25,6.75){\circle*{0.2}}
\put(9.75,8.5){\circle*{0.2}}

\put(13,6){\circle*{0.2}}
\put(13,7.5){\circle*{0.2}}
\put(14.5,6){\circle*{0.2}}
\put(14.5,7.5){\circle*{0.2}}
\put(16,6){\circle*{0.2}}
\put(16,7.5){\circle*{0.2}}
\put(13.75,6.75){\circle*{0.2}}
\put(15.25,6.75){\circle*{0.2}}
\put(14.5,8.5){\circle*{0.2}}

\drawline(1,6)(1,7.5)(4,7.5)(4,6)(1,6)
\drawline(4,6)(2.5,7.5)(1,6)
\drawline(1.75,6.75)(2.5,6)(2.5,7.5)(2.5,6)(3.25,6.75)
\drawline(1,7.5)(3.25,6.75)(1.75,6.75)(4,7.5)

\drawline(5,6)(5,7.5)(8,7.5)(8,6)(5,6)(5.75,8.5)(6.5,6)
\drawline(8,6)(6.5,7.5)(5,6)
\drawline(6.5,7.5)(5.75,8.5)(8,7.5)(5.75,6.75)(6.5,6)(6.5,7.5)(6.5,6)(7.25,6.75)(5,7.5)

\drawline(9,6)(9,7.5)(12,7.5)(12,6)(9,6)(9.75,8.5)(10.5,6)
\drawline(12,6)(10.5,7.5)(9,6)
\drawline(12,7.5)(9.75,6.75)(10.5,6)(10.5,7.5)(10.5,6)(11.25,6.75)(9,7.5)(9.75,8.5)(10.5,7.5)

\drawline(13,6)(13,7.5)(16,7.5)(16,6)(13,6)(14.5,8.5)
\drawline(16,6)(14.5,7.5)(13,6)
\drawline(16,6)(14.5,8.5)(16,7.5)(13.75,6.75)(13.75,6.75)(14.5,6)(14.5,7.5)(14.5,6)(15.25,6.75)(13,7.5)(14.5,8.5)(14.5,7.5)

\put(5.5,2){\circle*{0.2}}
\put(5.5,4){\circle*{0.2}}
\put(7,1){\circle*{0.2}}
\put(7,3){\circle*{0.2}}
\put(8.5,2){\circle*{0.2}}
\put(8.5,4){\circle*{0.2}}
\put(8.5,5){\circle*{0.2}}
\put(10,1){\circle*{0.2}}
\put(10,3){\circle*{0.2}}
\put(11.5,2){\circle*{0.2}}
\put(11.5,4){\circle*{0.2}}

\drawline(8.5,4)(11.5,2)(5.5,2)(5.5,4)(11.5,4)(8.5,5)(8.5,2)(10,3)(10,1)(7,3)(8.5,5)(5.5,4)(7,1)(10,1)(11.5,4)(11.5,2)(10,1)(8.5,4)(5.5,2)(7,1)(8.5,4)(5.5,4)(10,3)(8.5,5)(11.5,2)(11.5,4)(7,3)

\bezier{260}(14.5,5.95)(13.95,7.1)(14.5,8.5) 
\bezier{260}(7,1)(9.5,2)(11.5,4) 

\put(2.3,14.8){$G_1$}
\put(2.3,11.8){$G_5$} 
\put(2.3,8.8){$G_9$}
\put(2.3,5.3){$G_{13}$}

\put(6.3,14.8){$G_2$}
\put(6.3,11.8){$G_6$} 
\put(6.3,8.8){$G_{10}$}
\put(6.3,5.3){$G_{14}$}

\put(10.3,14.8){$G_3$}
\put(10.3,11.8){$G_7$} 
\put(10.3,8.8){$G_{11}$}
\put(10.3,5.3){$G_{15}$}

\put(14.3,14.8){$G_4$}
\put(14.3,11.8){$G_8$} 
\put(14.3,8.8){$G_{12}$}
\put(14.3,5.3){$G_{16}$}

\put(8.3,0.3){$G_{17}$}

\end{picture}
\end{large}
\vspace*{-0.4cm}
\caption{Graphs $G_1-G_{17}$.}
\label{17graphs}
\end{figure}
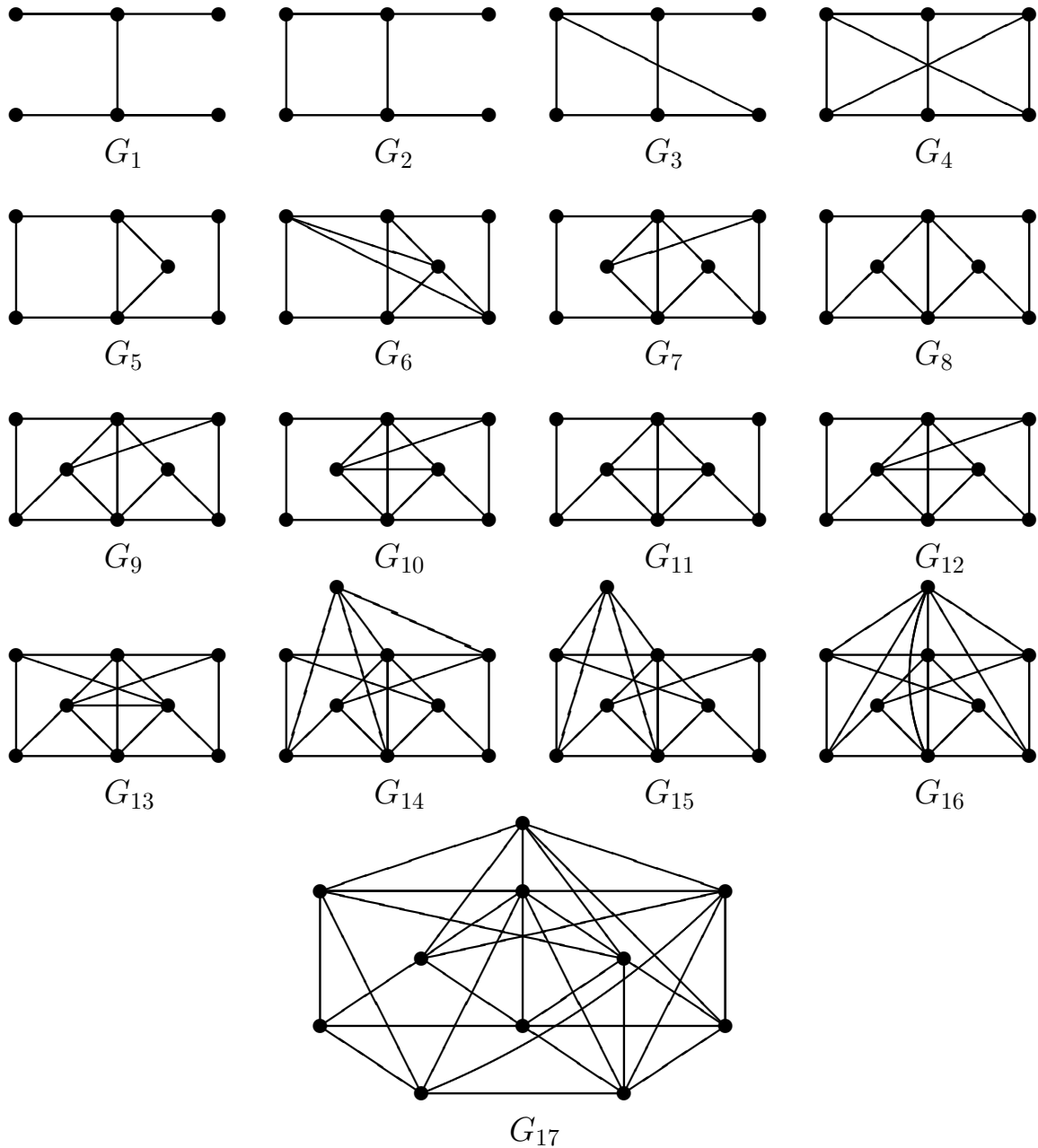

\begin{cor}
[Zverovich and Zverovich \cite{Zve1}]
\label{Cor2}
Suppose that a graph $G$ does not contain the complements  of  the
graphs $G_{1}-G_{17}$ 
in Figure \ref{17graphs} 
as induced  subgraphs,  and that  ${\rm diam}(G)>2.$
Then $G$ has an edge that does not belong to any triangle.
 \end{cor}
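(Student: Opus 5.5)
We pass to the complement and apply Theorem~\ref{ZZ}. Let $\bar G$ denote the complement of $G$. The hypothesis says exactly that $\bar G$ contains none of $G_1$--$G_{17}$ as an induced subgraph. Theorem~\ref{ZZ} then makes $\bar G$ domination perfect; in particular $\gamma(\bar G)=i(\bar G)$. We will translate the diameter condition on $G$ into a statement about $\gamma(\bar G)$, and the desired edge into a statement about $i(\bar G)$.

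The first step is to show that ${\rm diam}(G)>2$ gives $\gamma(\bar G)=2$. Suppose first that $G$ is connected. Then there are vertices $u,v$ at distance at least $3$ in $G$, so $u$ and $v$ are nonadjacent and have no common neighbour in $G$. Every other vertex $w$ is therefore non-adjacent in $G$ to $u$ or to $v$. Hence, in $\bar G$, every vertex other than $u,v$ is adjacent to $u$ or to $v$, and $\{u,v\}$ dominates $\bar G$. Moreover $\bar G$ has no dominating vertex, since such a vertex would be isolated in $G$. Thus $\gamma(\bar G)=2$. If $G$ is disconnected (diameter $\infty$), I will either take the statement to refer only to connected $G$, or handle it directly: pick $u,v$ in different components, and the same argument again gives $\gamma(\bar G)\le 2$. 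The degenerate cases with isolated vertices need a small separate check.

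The second step uses domination perfection: $i(\bar G)=\gamma(\bar G)=2$. So $\bar G$ has a maximal independent set $\{x,y\}$. In $G$ this means $xy$ is an edge. Maximality of $\{x,y\}$ in $\bar G$ means every other vertex $z$ is adjacent in $\bar G$ to $x$ or to $y$, i.e.\ $z$ is non-adjacent in $G$ to $x$ or to $y$. Hence no $z$ is a common neighbour of $x$ and $y$ in $G$, and the edge $xy$ lies in no triangle. This is the required edge.

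The main obstacle is bookkeeping rather than depth, since Theorem~\ref{ZZ} carries all the weight. The points to check are that complementation carries the forbidden-subgraph hypothesis correctly, and the boundary cases ($G$ disconnected, or $\bar G$ having a dominating vertex, i.e.\ $G$ having an isolated vertex). If $G$ has an isolated vertex and more than one other component, I will instead pick $u,v$ from two nontrivial components when possible. The trivial situations where no edge exists at all should be excluded by the convention that the diameter is taken over connected graphs.
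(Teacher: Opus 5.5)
Your argument is correct and is exactly the intended derivation of this corollary from Theorem~\ref{ZZ}: complementation transfers the forbidden-subgraph hypothesis to $\bar G$, and a pair $u,v$ at distance at least $3$ gives $\gamma(\bar G)=2$. Domination perfection then yields a maximal independent pair $\{x,y\}$ of $\bar G$, i.e.\ an edge $xy$ of $G$ lying in no triangle.

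One remark on your boundary discussion. The connectedness convention is genuinely necessary, not just convenient. The graph $K_1\cup K_3$ satisfies the hypotheses with infinite diameter, yet every edge lies in a triangle. Choosing $u,v$ differently cannot help while an isolated vertex dominates $\bar G$. By contrast, disconnected graphs without isolated vertices are covered verbatim by your argument.
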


We say that a graph $G$  belongs  to  the  class ${\cal L}$  if  the
following conditions hold:
\be
\up\item[(i)] $G$ is planar;
\up\item[(ii)] $G$ is bipartite;
\up\item[(iii)] $G$ has maximum degree 3;
\up\item[(iv)] $G$ has girth $g(G)\ge k$, where $k$ is fixed.
\ee

\begin{cor}
[Zverovich and Zverovich \cite{Zve1}]
\label{Cor3}
The Dominating Set and  Inde\-pen\-dent
Dominating Set problems 
are both $NP$-complete on the
class ${\cal L}$.
\end{cor}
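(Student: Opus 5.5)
Corollary~\ref{Cor3} follows from Theorem~\ref{ZZ} combined with a polynomial reduction from a known NP-complete domination problem. Every graph in ${\cal L}$ with large girth avoids all of $G_1$--$G_{17}$ as induced subgraphs. Indeed, each $G_j$ in Figure~\ref{17graphs} contains a cycle of length at most $8$, so any graph of girth greater than $8$ (in fact greater than the maximum over $j$ of the girth of $G_j$) contains none of them. Hence, once $k\ge 9$, Theorem~\ref{ZZ} shows that every $G\in{\cal L}$ is domination perfect, and in particular $\gamma(G)=i(G)$. The two decision problems ``$\gamma(G)\le t$'' and ``$i(G)\le t$'' therefore have identical answers on ${\cal L}$. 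It suffices to prove NP-completeness of Dominating Set on ${\cal L}$; Independent Dominating Set then follows immediately. Membership in NP is trivial for both problems.

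For hardness I will start from Dominating Set on planar graphs of maximum degree $3$, which is NP-complete by Garey--Johnson. I then apply a local edge-subdivision gadget that preserves planarity and maximum degree $3$, increases the girth, makes the graph bipartite, and shifts $\gamma$ by a computable amount. The standard choice is to replace each edge $uv$ by a path $u,a,b,c,v$, i.e.\ subdivide it three times. I expect the identity $\gamma(G')=\gamma(G)+|E(G)|$ for this operation, which is the known fact that subdividing one edge three times raises $\gamma$ by exactly one. I will prove it directly. A dominating set $D$ of $G$ extends by adding, on each edge, the middle vertex $b$ or a suitable end-subdivision vertex. Conversely, any dominating set of $G'$ can be normalised path by path: on each path it uses at least one internal vertex, and if it uses two, they can be shifted to endpoints. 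Removing one vertex per path then leaves a dominating set of $G$.

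Iterating this operation $r$ times multiplies cycle lengths by $4^r$, so choosing $r$ with $4^r\cdot 3\ge k$ gives girth at least $k$. The result is still planar with maximum degree $3$, and the new $\gamma$ is known exactly. Bipartiteness needs an extra step, since multiplying by $4$ keeps odd cycles odd. I will therefore add one final, different subdivision that makes every cycle even while keeping control of $\gamma$. The natural candidate is replacing each edge by a path with an odd number of new vertices, such as $3\cdot$ (an odd number of) extra triples, combined with the doubling trick of subdividing each edge once more. That single subdivision changes $\gamma$ in an uncontrolled way.

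The main obstacle is precisely this bipartiteness step. My first attempt will instead subdivide each edge $3t+1$ or $3t+2$ times in a way that makes every cycle even. Alternatively, I will reduce from a problem already NP-complete on bipartite planar cubic-degree graphs, such as Dominating Set restricted to planar bipartite graphs of maximum degree $3$, known from the literature. The triple subdivisions then only serve to raise the girth, since they preserve bipartiteness (each edge becomes a path of even length $4$). The chain of reductions then gives NP-completeness on ${\cal L}$ for both problems.
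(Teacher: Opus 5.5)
\textbf{Gap: the transfer from Dominating Set to Independent Dominating Set.} Your claim that each of $G_1$--$G_{17}$ contains a cycle is false for $G_1$. In Figure~\ref{17graphs}, $G_1$ is the double star: two adjacent centres, each carrying two pendant vertices. It is exactly the graph $\langle u,a,s;v,c,t\rangle$ with no extra edges from the proof of Lemma~2(i). Since it is a tree, no girth condition excludes it. In fact $G_1$ is planar, bipartite, of maximum degree $3$ and of infinite girth, so $G_1\in{\cal L}$ for every $k$, while $\gamma(G_1)=2<3=i(G_1)$. Hence ${\cal L}$ is not domination perfect. The problems ``$\gamma(G)\le t$'' and ``$i(G)\le t$'' do not have the same answers on ${\cal L}$, so your derivation of NP-completeness for Independent Dominating Set fails as written.

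\textbf{Repair.} Apply Theorem~\ref{ZZ} only to the graphs your reduction outputs, not to all of ${\cal L}$. After one triple subdivision of a graph of maximum degree $3$, every neighbour of a degree-$3$ vertex is a subdivision vertex of degree $2$. So no two vertices of degree $3$ are adjacent. The two centres of $G_1$ are adjacent and already have degree $3$ in $G_1$, so such a graph cannot contain an induced $G_1$. Each of $G_2$--$G_{17}$ contains a cycle of length at most $4$: $G_2$--$G_4$ contain a $4$-cycle through $u$ and $v$, and $G_5$--$G_{17}$ contain a vertex of $C$, which forms a triangle with $u$ and $v$. So girth at least $5$ excludes them. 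Therefore every output graph $G'$ is domination perfect, and $i(G')=\gamma(G')$ equals $\gamma(G)$ plus the computable shift. Your single reduction from Dominating Set on planar graphs of maximum degree $3$ then gives hardness of both problems on ${\cal L}$.

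\textbf{Bipartiteness is not an obstacle.} Your worry here comes from an arithmetic slip: multiplying a cycle length by $4$ makes it even. Replacing every edge by a path of length $4$ always yields a bipartite graph. Colour the original vertices and the middle subdivision vertices with one colour, and the other subdivision vertices with the second colour. So you need neither the extra subdivision nor a bipartite source problem. The $3t+1$ or $3t+2$ subdivisions you considered would in any case lose exact control of $\gamma$.
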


\subsection*{On Camby--Plein's Paper}

In 2017, Camby and Plein 
(\cite{Cam}, p.~714)
claimed that the graphs $H_5$ and $H_6$ in Figure \ref{CPDiagram}
are counterexamples to Theorem 
\ref{ZZ}: 
``Indeed, none contains any 
$G_1, G_2, ..., G_{17}$
as an induced subgraph, as the system GraphsInGraphs [-] confirmed. And none is domination perfect, since 
$\ga(H_5) = \ga(H_6) =
2 \not= 3 = i(H_5) = i(H_6).$"

\vspace*{0.2cm}
\begin{figure}[b!] \centering\includegraphics[width=0.5\linewidth]{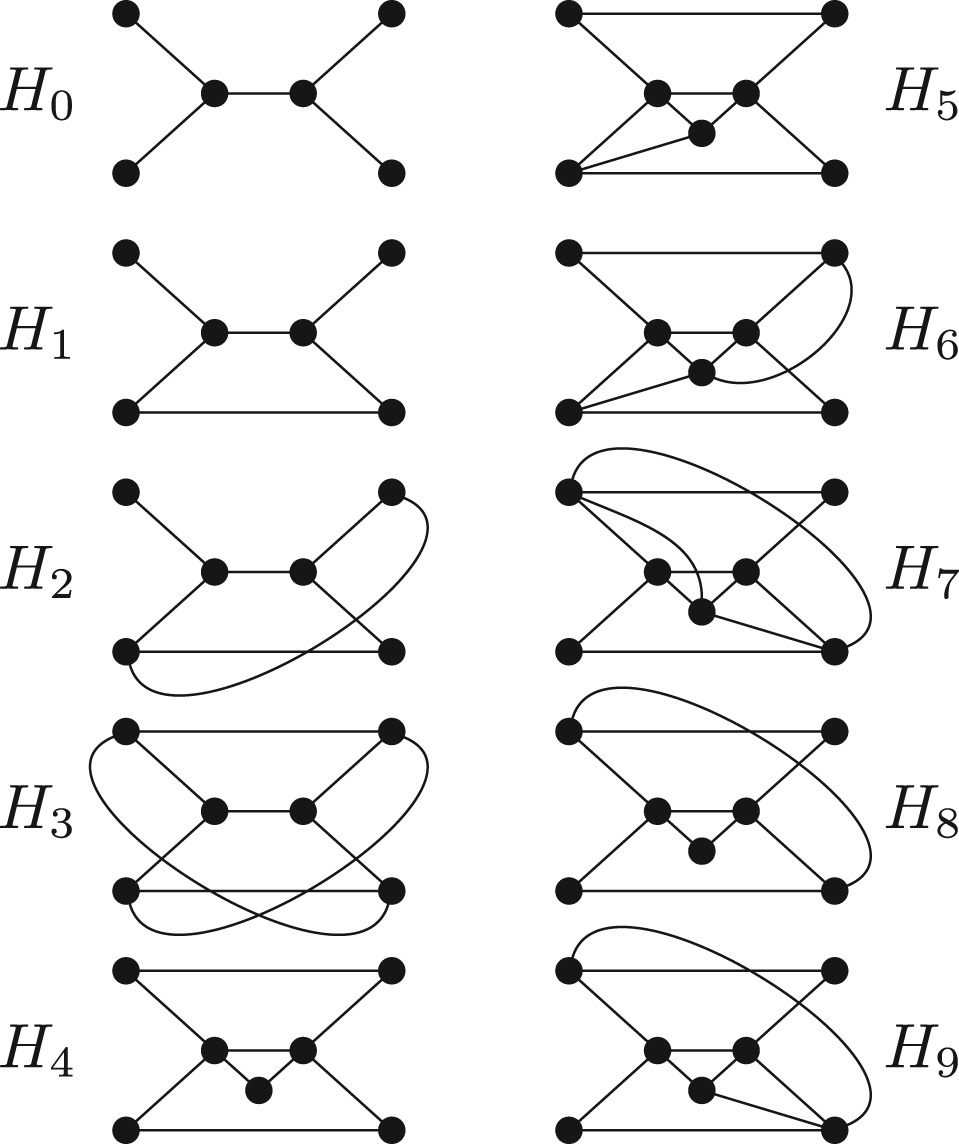} 
\caption{Graphs $H_0-H_9$ \cite{Cam}.}
\label{CPDiagram} 
\end{figure}

In the graph $H_5$, let $x$ denote the top-right vertex and
let $y$ denote the bottom-left vertex. Then, $\{x,y\}$ is an
independent dominating set.
Because $H_5$ does not have a
dominating vertex, we obtain
$i(H_5) = 2.$
In a similar way, 
$i(H_6) = 2.$
Thus, the above `proof' that the graphs $H_5$ and $H_6$ in Figure \ref{CPDiagram}
are counterexamples to Theorem 
\ref{ZZ} is incorrect.
In fact, the following statement can be proved directly; it also follows from Theorem~\ref{ZZ}.

\begin{prop}
\label{Prop1}
The graphs $H_5$ and $H_6$ in Figure \ref{CPDiagram}
are domination perfect.    
\end{prop}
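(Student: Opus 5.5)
The direct route is to check the definition of domination perfection for every induced subgraph $H$ of $H_5$ and of $H_6$, using the Sumner--Moore reduction: it suffices to verify $\gamma(H)=i(H)$ for the induced subgraphs $H$ with $\gamma(H)=2$. (An alternative is to note that, by Theorem~\ref{ZZ}, it is enough to show that neither graph contains any of $G_1$--$G_{17}$ as an induced subgraph. Camby and Plein themselves report this via their software, and the computation above shows their claimed failure of domination perfection rests on a miscalculation of $i$.) Since we do not want to rely on Theorem~\ref{ZZ} being the point in dispute, I would give the direct argument and mention the corollary route only as a cross-check.

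For the direct proof I would first read off the structure of $H_5$ and $H_6$ from Figure~\ref{CPDiagram}: they are small graphs on about seven or eight vertices. The aim is a short structural observation that makes every dominating pair easy to upgrade to an independent one. In $H_5$, take $x$ (top-right) and $y$ (bottom-left) as above. Then classify the induced subgraphs $H$ with $\gamma(H)=2$ according to a dominating pair $\{u,v\}$ of $H$.

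If $u$ and $v$ are nonadjacent, then $i(H)=2$ immediately. So the only real case is an adjacent dominating pair $uv$. Here I would exhibit an independent dominating pair in $H$, typically of the form $\{u,w\}$ where $w$ is a vertex of $H$ outside $N(u)$ that dominates what $v$ privately dominates. Alternatively, I would show that $\{x,y\}$, or an analogous antipodal pair, restricted to $H$ still works. Two complications need care: the case where $u$ or $v$ privately dominates itself, and the case where $H$ omits $x$ or $y$.

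To keep this finite and honest, I would organize the check by which vertices are deleted. Because the graphs are small and fairly symmetric, their automorphisms cut the cases down considerably. Alternatively, I would note that most proper induced subgraphs are $K_{1,3}$-free and invoke Theorem~\ref{A-L} for those, so that only subgraphs containing an induced claw need explicit treatment. I would also use Corollary~\ref{GiTi} where possible: if neither graph contains $G_1$--$G_4$, $T_1$ or $T_2$, domination perfection follows at once, and I would check this first since it would give a clean one-line proof. The main obstacle is exactly this case bookkeeping for the subgraphs that contain claws (or, if I take the corollary route, verifying the absence of $T_1$ and $T_2$, which may in fact be present). I expect the claw-containing subgraphs to be few, with each settled by naming an explicit independent dominating pair. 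The argument for $H_6$ would mirror the one for $H_5$, using the analogous pair of corner vertices.
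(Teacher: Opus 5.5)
Your choice of route is fine. The paper itself offers only the remark that the proposition can be checked directly or deduced from Theorem~\ref{ZZ}. But what you have written is a plan, not a proof. The proposition is a finite verification, and you never carry it out. You do not pin down the two graphs, you do not list the induced subgraphs to be checked, and apart from the pair $\{x,y\}$ taken from the paper you exhibit no independent dominating set. Phrases like ``typically of the form $\{u,w\}$'' and ``I expect the claw-containing subgraphs to be few'' stand exactly where the argument should be.

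The shortcut you hope for is also not available. Read off from their occurrences in $G_7,\ldots,G_{12}$ and $G_{13},\ldots,G_{17}$, and in the notation of the proof of Theorem~\ref{ZZ}, the graphs are as follows. $H_5$ is $\langle u,a,b;v,c,d;f\rangle$ with $a\nssim b$, $c\nssim d$, $ac,bd$ the only edges between $\{a,b\}$ and $\{c,d\}$, and $f$ adjacent to $b$ alone among $a,b,c,d$; this is $G_7$ minus the vertex $g$ of Lemma 3. $H_6$ is the same graph with $f\ssim\{b,c\}$, i.e.\ $G_{13}$ minus $g$ (cf.\ Lemma 4). Deleting $f$ leaves two induced claws centred at $u$ and $v$ sharing only the edge $uv$, with a perfect matching between their remaining leaves. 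That graph is one of $T_1,T_2$, so Corollary~\ref{GiTi} cannot be invoked.

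The gap is easy to close. All six graphs forbidden in Corollary~\ref{GiTi} have six vertices, so every graph on at most five vertices is domination perfect. For the $7$-vertex graphs $H_5,H_6$ it therefore suffices to find an independent dominating set of size at most $2$ in the graph itself and in each of its seven vertex-deleted subgraphs; this works because $\gamma\le i$ always and $\gamma=1$ forces $i=1$. In both $H_5$ and $H_6$, the pair $\{b,c\}$ (your $\{x,y\}$) does this for the whole graph and after deleting any one of $u,v,a,d,f$. After deleting $b$, use $\{a,v\}$; after deleting $c$, use $\{u,d\}$.

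An even shorter argument that avoids Theorem~\ref{ZZ} altogether: $H_5$ is an induced subgraph of $G_7$ and $H_6$ of $G_{13}$, properly so since $i(H_5)=i(H_6)=2<3$. By Topp and Volkmann, $G_7$ and $G_{13}$ are minimal domination imperfect. Hence every induced subgraph of $H_5$ or $H_6$ satisfies $\gamma=i$.
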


Furthermore, the following result was proved in \cite{Cam} (p.~715):

\begin{thm}
[Camby and Plein \cite{Cam}]
\label{Th8}
Let $G$ be a $\{H_0,...,H_7\}$-free graph, where the graphs $H_k$ are shown in Figure~\ref{CPDiagram}.
Then $$i(G) = \ga(G).$$    
\end{thm}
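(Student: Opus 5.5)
The plan is to derive Theorem~\ref{Th8} from an earlier sufficient condition rather than argue from scratch; Corollary~\ref{GiTi} and Theorem~\ref{T-V} are the natural candidates. Since the figure with $H_0,\dots,H_7$ is not reproduced in detail, I will assume, as the abstract suggests ("previously known"), that these graphs correspond to graphs already in the literature. Specifically, I expect $H_0$ to be the claw $K_{1,3}$ or a close relative, and the remaining $H_k$ to be among $G_1$--$G_4$, $T_1$, $T_2$ or the Fulman graphs.

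The first step is to identify each $H_k$ in Figure~\ref{CPDiagram} up to isomorphism. Then I check the following. If $H_0=K_{1,3}$, then every $\{H_0,\dots,H_7\}$-free graph is claw-free, and Theorem~\ref{A-L} gives $i(G)=\gamma(G)$ immediately. Here the other seven forbidden graphs are superfluous, which is precisely the point that the result is "previously known".

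If instead the family matches $\{G_1,\dots,G_4,T_1,T_2\}$ possibly together with Fulman's graphs, I argue as follows. Each forbidden graph of Corollary~\ref{GiTi} either equals some $H_k$ or contains some $H_k$ as an induced subgraph. Hence an $\{H_0,\dots,H_7\}$-free graph $G$ avoids $G_1$--$G_4$, $T_1$ and $T_2$. Corollary~\ref{GiTi} then says $G$ is domination perfect, and in particular $\gamma(G)=i(G)$ with $H=G$. Alternatively, if the list matches Fulman's eight graphs, I invoke Theorem~\ref{Ful} directly.

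The main obstacle is this containment check: for each graph in the chosen earlier forbidden list, I need to exhibit an induced copy of some $H_k$. I would first try the map to Corollary~\ref{GiTi}, because its list is short. I would compare degree sequences and look for the claw centres, as $T_1$ and $T_2$ are built from two claws sharing an edge.

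If some graph of that list contained no $H_k$, I would fall back on the direct argument behind Corollary~\ref{GiTi}. Take a minimum dominating set $D$ maximizing the number of edges within it is minimized, i.e.\ with fewest edges inside $D$. Assume an edge $uv$ lies in $\langle D\rangle$. Replace $u$ by a suitable private neighbour. Show that the local configuration forced when this exchange fails is an induced $H_k$. This contradicts the freeness assumption, so $D$ is independent and $i(G)=\gamma(G)$.
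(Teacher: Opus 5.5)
There is a genuine gap. You never identify the $H_k$, and the two branches you actually develop do not survive the identification.

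According to the paper, $H_0,\dots,H_4\simeq G_1,\dots,G_5$, $H_7\simeq G_6$, $H_5\simeq U_1$ and $H_6\simeq U_2$. So $H_0$ is $G_1$, not $K_{1,3}$, and the Allan--Laskar branch never applies.

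The route through Corollary~\ref{GiTi} breaks exactly at the containment check you flagged as the main obstacle. Two induced claws with different centres sharing exactly one edge form six graphs, according to what the four outer leaves induce:
\begin{itemize}
\item $4K_1$, $K_2\cup 2K_1$, $P_3\cup K_1$ or $C_4$: these are $G_1,\dots,G_4$, with $i=3$;
\item $2K_2$ or $P_4$: these are $T_1,T_2$, with $\gamma=i=2$.
\end{itemize}
Hence $T_1,T_2$ are not isomorphic to $H_0,\dots,H_3$. As six-vertex graphs they also cannot contain $H_4$, $H_7$, $H_5$ or $H_6$, which have seven vertices. So $T_1$ and $T_2$ are themselves $\{H_0,\dots,H_7\}$-free, and Corollary~\ref{GiTi} does not cover the class in Theorem~\ref{Th8}. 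No choice of containments repairs this, since a graph satisfying the hypothesis of Theorem~\ref{Th8} (for example $T_1$) violates that of Corollary~\ref{GiTi}.

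Your fallback does not close the gap either. Exchanging $u$ alone for a private neighbour is not a valid move in general, since it can leave other private neighbours of $u$ undominated. A bounded configuration appears only after the minimal-counterexample reduction of Lemma~1: swap the pair $\{u,v\}$ for an independent dominating pair of $\langle A\cup B\cup C\cup\{u,v\}\rangle$, so that $V(F)=A\cup B\cup C\cup\{u,v\}$ and no independent pair dominates $F$. You then need the $2K_2$ structure of Lemma~2 before any forbidden subgraph can be extracted. ``The failed exchange forces some $H_k$'' is the whole theorem, not a step.

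The paper's proof is the branch you mention only in passing. Theorem~\ref{Ful} already forbids $H_0,\dots,H_4,H_7$ verbatim, and its two remaining graphs satisfy $U_1\simeq H_5$ and $U_2\simeq H_6$. So Theorem~\ref{Th8} is exactly Fulman's theorem.

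Alternatively, your containment strategy succeeds if you aim it at Theorem~\ref{ZZ} rather than Corollary~\ref{GiTi}. There $G_1,\dots,G_5\simeq H_0,\dots,H_4$ and $G_6\simeq H_7$. Each of $G_7,\dots,G_{12}$ contains an induced $H_5$, and each of $G_{13},\dots,G_{17}$ contains an induced $H_6$. Thus every $\{H_0,\dots,H_7\}$-free graph is $\{G_1,\dots,G_{17}\}$-free and hence domination perfect. These isomorphism and containment checks \emph{are} the proof, and your proposal leaves them undone.
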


However, Theorem~\ref{Th8} is a known result published in 1993 (and cited in \cite{Cam} as Theorem~5):

\begin{thm}
[Fulman \cite{Ful}]
\label{Ful}
Let $G$ be a $\{H_0,...,H_4, H_7, U_1, U_2\}$-free graph, where the graphs $U_1$ and $U_2$ are shown in Figure~\ref{U1U2}.
Then $G$ is a domination perfect graph; in particular,
$$i(G) = \ga(G).$$  
\end{thm}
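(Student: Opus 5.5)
The plan is to derive this statement from the forbidden-subgraph characterization in Theorem~\ref{ZZ}, rather than repeat Fulman's original argument. Being domination perfect is a hereditary property, and Theorem~\ref{ZZ} says that a graph is domination perfect exactly when it contains none of $G_1$--$G_{17}$ as an induced subgraph. So it suffices to prove the following containment claim.

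\emph{Claim.} Each of $G_1,\dots,G_{17}$ contains at least one of $H_0,\dots,H_4,H_7,U_1,U_2$ as an induced subgraph.

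Given the claim, a $\{H_0,\dots,H_4,H_7,U_1,U_2\}$-free graph $G$ is $\{G_1,\dots,G_{17}\}$-free. Theorem~\ref{ZZ} then makes $G$ domination perfect, and in particular $i(G)=\ga(G)$.

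To verify the claim I will go through the graphs $G_j$ one by one, using the structure that minimal domination imperfect graphs share. In each $G_j$ there is an adjacent dominating pair $\{a,b\}$, so $\ga(G_j)=2$, while every maximal independent set has at least three vertices. I will then look for the obstruction that makes $i>2$. For each vertex $v$ outside $\{a,b\}$, some vertex is left undominated by every independent pair containing $v$, and the configurations witnessing this are essentially the small graphs $H_k$ and $U_l$. Concretely, for each $G_j$ I will exhibit a vertex subset that induces one of the eight graphs, and check adjacencies and non-adjacencies directly. For $G_1$--$G_4$ (order six), I expect that $G_j$ itself, or $G_j$ minus a single vertex, is already one of $H_0,\dots,H_4$. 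For the larger graphs $G_5$--$G_{17}$, I expect to delete one to four vertices. I will try first the vertices that are not needed to block independent pairs, such as the extra neighbours of $a$ or $b$.

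The main obstacle is that the eight forbidden graphs are given only by figures. The claim therefore comes down to a finite but careful case check: for each of the seventeen graphs, find a correct induced copy and confirm that no extra edge spoils it. I expect the densest graphs, $G_{13}$--$G_{17}$, to need the most care. For these I will take the subset around the two vertices of highest degree and compare degree sequences with the $H_k$ and $U_l$ before checking edges.

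As a fallback, if some $G_j$ resists this check, I can argue directly. By the Sumner--Moore theorem it is enough to show $i(H)=2$ for every induced subgraph $H$ with $\ga(H)=2$. Take a dominating pair $\{a,b\}$ with $a\sim b$, chosen to maximise the number of vertices adjacent to neither. Then choose vertices $c\nsim a$ and $d\nsim b$ that block the independent pairs $\{a,\cdot\}$ and $\{b,\cdot\}$. Following the possible adjacencies among $a,b,c,d$ and the vertices they fail to dominate should force an induced $H_k$ or $U_l$.
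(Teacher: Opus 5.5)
Your main route is the paper's own: the paper notes that Fulman's theorem follows from Theorem~\ref{ZZ} because every $G_m$ contains one of the eight graphs. Specifically, $G_1,\dots,G_5\cong H_0,\dots,H_4$ and $G_6\cong H_7$, while $G_7,\dots,G_{12}$ contain an induced $H_5\cong U_1$ and $G_{13},\dots,G_{17}$ contain an induced $H_6\cong U_2$. So no deletions are needed for $G_1$--$G_6$ (and deleting a vertex from a six-vertex $G_j$ could never work, since $H_0,\dots,H_4$ all have at least six vertices), and the fallback argument is unnecessary.
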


\begin{figure}[h!] \centering\includegraphics[width=0.6\linewidth]{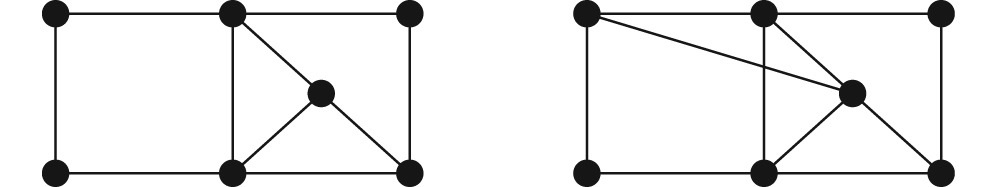} 
\caption{Graphs $U_1$ (left) and $U_2$ (right).}
\label{U1U2} 
\end{figure}

Indeed, it is easy to see that the graph $U_1$ is isomorphic to the graph $H_5$, and the graph $U_2$ is isomorphic to the graph $H_6$.
Therefore, Theorem~\ref{Th8} is equivalent to Theorem~\ref{Ful}.
The proof of Theorem \ref{Th8}, which is based on Lemma 1 of \cite{Cam}, is also known.

It may be pointed out that Theorems \ref{Th8} and \ref{Ful} follow from Theorem \ref{ZZ} because any graph $G_m$, $1\le m\le 17$, contains 
at least one of the graphs $H_0,..., H_7$ as an induced subgraph.
Indeed, $G_1,...,G_5$ are isomorphic to 
$H_0,...,H_4$, respectively; $G_6$ is isomorphic to $H_7$; $G_7,...,G_{12}$ contain an induced subgraph isomorphic to $H_5$; and
$G_{13},...,G_{17}$ contain an induced subgraph isomorphic to $H_6$.

Using Theorem \ref{Th8},
a `new characterization' of domination perfect graphs was given in \cite{Cam} (p.~716):

\begin{thm}
[Camby and Plein \cite{Cam}]
\label{NewChar}
The following assertions are equivalent for every graph $G$:
\begin{itemize}
\up\item $G$ is a $\{H_0,...,H_7\}$-free.
\up\item $G$ is a $\{H_0,...,H_9\}$-free.
\up\item For every induced subgraph $H$ 
of $G$, it holds that $i(H) = \ga(H).$    
\end{itemize}
\end{thm}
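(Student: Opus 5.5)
The statement cannot be proved as written, and I plan to show that it is false. The implication from the first assertion to the third is correct: it is exactly Theorem~\ref{Th8}, or equivalently Fulman's Theorem~\ref{Ful}, since $U_1\cong H_5$ and $U_2\cong H_6$, applied to every induced subgraph (which is again $\{H_0,\dots,H_7\}$-free). The implication from the third assertion back to the first is where the equivalence breaks, and the natural counterexample is $G=H_5$ itself, or symmetrically $G=H_6$.

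The key steps are as follows.

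\begin{enumerate}
\item Observe that $H_5$ trivially contains $H_5$ as an induced subgraph, so $H_5$ is not $\{H_0,\dots,H_7\}$-free and the first assertion fails for $G=H_5$. It is not $\{H_0,\dots,H_9\}$-free either, so the second assertion also fails.
\item Show that the third assertion holds for $G=H_5$, that is, $\gamma(H)=i(H)$ for every induced subgraph $H$ of $H_5$. This is precisely Proposition~\ref{Prop1}.
\end{enumerate}

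For step 2 I would give two routes, each of which settles Proposition~\ref{Prop1} on its own.

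The first route is direct. By the theorem of Sumner and Moore it suffices to consider induced subgraphs $H$ of $H_5$ with $\gamma(H)=2$ and show $i(H)=2$. For $H=H_5$ itself, the paper already exhibits the independent dominating pair $\{x,y\}$ (the top-right and bottom-left vertices). For proper induced subgraphs, I would delete one vertex at a time and, in each case, find an independent dominating pair. I would use the symmetry of $H_5$ to cut down the number of cases, and note that whenever $H$ has a dominating vertex, $\gamma(H)=1$ and there is nothing to check.

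The second route goes through Theorem~\ref{ZZ}. I would check that $H_5$ contains none of $G_1,\dots,G_{17}$ as an induced subgraph, comparing orders and degree sequences first to rule out most candidates quickly. Then $H_5$ is domination perfect.

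The same argument applies verbatim to $H_6$. Since $H_5$ and $H_6$ satisfy the third assertion but not the first, the claimed equivalence fails.

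The main obstacle is step 2. On the direct route, it means correctly handling all induced subgraphs of $H_5$ without relying on a picture. On the Theorem~\ref{ZZ} route, it means a careful subgraph-containment check; the graphs $G_7,\dots,G_{12}$ need the most care because they contain $H_5$ and are only slightly larger. I would try the direct route first, using the Sumner--Moore reduction to $\gamma(H)=2$.
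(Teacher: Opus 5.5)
Your proposal is correct and is essentially the paper's own refutation: $H_5$ (or $H_6$) is not $\{H_0,\dots,H_7\}$-free, yet it is domination perfect by Proposition~\ref{Prop1}, which the paper likewise notes can be checked directly or deduced from Theorem~\ref{ZZ}; the direction from the first assertion to the third is just Theorem~\ref{Th8}. One small correction to your containment check: $G_7,\dots,G_{17}$ all have at least eight vertices, more than $H_5$ (which is a proper induced subgraph of $G_7$, since $i(H_5)=2<3=i(G_7)$), so they cannot occur in $H_5$ and need no care at all, and only $G_1,\dots,G_6$ have to be ruled out.
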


This characterization is not correct. 
Indeed, by Proposition \ref{Prop1},
the graph $H_5$ is domination perfect, and thus the third condition of Theorem \ref{NewChar} is satisfied for $H_5$.
However, the graph $H_5$ contains itself as an induced subgraph
and hence is not $H_5$-free.
The graph $H_6$ provides another counterexample to this characterization, and additional counterexamples can be constructed.
Moreover, it is unclear why the class of $\{H_0,...,H_9\}$-free graphs is included in the theorem, 
since this class coincides with 
$\{H_0,...,H_7\}$-free graphs:
$H_8$ contains $H_1$ and 
$H_9$ contains $H_2$ as an induced subgraph.

Furthermore, using Theorem \ref{Th8}, Camby and Plein \cite{Cam} attempted to `improve' some known results.
For instance, they reformulated 
Corollary \ref{Cor2}
for $\{\overline H_0,...,\overline H_7\}$-free graphs, i.e.,  
for a subclass of $\{\overline G_1,...,\overline G_{17}\}$-free graphs, 
thereby obtaining a weaker result.
Finally, Camby and Plein \cite{Cam} attempted to `rectify' Corollary 
\ref{Cor3} and the next theorem, whose original proofs are valid.

Let ${\cal M}$ be the class of graphs $G$ such that, for every induced
subgraph $H$ of $G$, $H$ has a unique minimum irredundant set (see \cite{Bol}) if and only if it has a unique minimum dominating set.

\begin{thm}
[Fischermann, Volkmann, Zverovich 
\cite{Fis}] 
A graph $G$ belongs to 
the class ${\cal M}$ if and only if $G$ is 
$\{B_1, B_2, B_3\}$-free.
\end{thm}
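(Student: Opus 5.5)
Since the paper does not reproduce the graphs $B_1$, $B_2$, $B_3$, the plan below is deliberately generic: it is built on the properties that such forbidden graphs must have, not on their specific structure.

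\emph{Necessity.} The class ${\cal M}$ is hereditary by definition, because the defining condition is imposed on every induced subgraph. So it suffices to check directly that each of $B_1,B_2,B_3$ violates the condition for $H=B_j$ itself. For each $j$, I would exhibit one of two situations. Either $B_j$ has a unique minimum irredundant set but two minimum dominating sets, or $B_j$ has a unique minimum dominating set but two distinct minimum irredundant sets. This is a finite verification on small graphs, using that every minimal dominating set is irredundant, so ${\rm ir}(H)\le\gamma(H)$.

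\emph{Sufficiency.} Suppose $G$ is $\{B_1,B_2,B_3\}$-free but $G\notin{\cal M}$. Choose an induced subgraph $H$ of $G$ with $|H|$ minimal among those violating the condition. Then every proper induced subgraph of $H$ lies in ${\cal M}$, and I would show that $H$ must itself contain some $B_j$.

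First I would reduce to the case ${\rm ir}(H)<\gamma(H)$. If ${\rm ir}(H)=\gamma(H)$, every minimum dominating set is a minimum irredundant set. Hence a unique minimum irredundant set forces a unique minimum dominating set. In the reverse direction, take the unique minimum dominating set $D$ and a second minimum irredundant set $I\ne D$. Then $I$ is not dominating, since otherwise it would be a second minimum dominating set. That is the situation to handle there.

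In the main case ${\rm ir}(H)<\gamma(H)$, take a minimum irredundant set $I$, which is not dominating. I would use the Bollob\'as--Cockayne private-neighbour analysis: each $x\in I$ has a private neighbour $p_x$, and some vertex $u$ is not dominated by $I$. Replacing $x$ by $p_x$, or adding $u$ and deleting vertices, produces alternative irredundant or dominating sets of controlled size. These replacements either create a second minimum set, where uniqueness was assumed, or destroy a second one, where non-uniqueness was assumed. By minimality, $H$ must equal the subgraph induced by $I$, the chosen private neighbours, $u$, and the vertices witnessing the relevant second minimum set; every other vertex can be deleted without changing the status of the condition.

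\emph{Main obstacle.} The hardest step is this last one: showing that a minimal violating $H$ has bounded size, presumably ${\rm ir}(H)=1$ or $2$, and then enumerating the adjacencies among the few vertices involved to see that each configuration is isomorphic to $B_1$, $B_2$ or $B_3$. I would first try to prove $|I|\le 2$. The idea is that if $|I|\ge3$, deleting a vertex of $I$ together with its private neighbourhood gives a proper induced subgraph that is still a counterexample, contradicting minimality. After that, the case analysis on private neighbours and the undominated vertex is finite, and I would close it using $\{B_1,B_2,B_3\}$-freeness.
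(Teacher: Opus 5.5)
The paper does not prove this theorem; it only quotes it from Fischermann, Volkmann and Zverovich. So there is no in-paper argument to compare with. Judged on its own, your proposal has a genuine gap: it is a template for a minimal-counterexample argument in which the sufficiency direction is never carried out.

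The two load-bearing steps are asserted without argument, and they presuppose a monotonicity that fails:
\begin{itemize}
\item that a minimal violator $H$ is induced by $I$, one private neighbour of each vertex of $I$, an undominated vertex $u$ and some ``witnesses'';
\item that for $|I|\ge 3$, deleting some $x\in I$ together with its private neighbours leaves a smaller violator.
\end{itemize}
Uniqueness of a minimum dominating (or minimum irredundant) set is a statement about \emph{all} sets of that size, so keeping a few witness vertices does not certify it. Deleting vertices can lower $\gamma$ or ${\rm ir}$ and create new minimum sets: $P_3$ has a unique minimum dominating set, while its induced subgraph $P_2$ has two. Likewise, after deleting $x$ and its private neighbours, $I\setminus\{x\}$ stays irredundant but need not stay maximal. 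For some $w$, the redundancy of $I\cup\{w\}$ may have come only from $x$ losing all its private neighbours to $w$; this already happens in the example below with $w=v_1$.

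Your opening ``reduction'' to ${\rm ir}(H)<\gamma(H)$ also does not dispose of the case ${\rm ir}(H)=\gamma(H)$. You correctly describe what a violation must look like there, then leave it, but the case is not vacuous. Take $P_6=v_1v_2\cdots v_6$:
\begin{itemize}
\item $\{v_2,v_5\}$ is the unique minimum dominating set;
\item $\{v_3,v_4\}$ is a second maximal irredundant set of size ${\rm ir}(P_6)=\gamma(P_6)=2$, since adding $v_1$ or $v_2$ destroys the only private neighbour $v_2$ of $v_3$, and symmetrically on the other side.
\end{itemize}
Every proper induced subgraph of $P_6$ is a disjoint union of paths of order at most $5$. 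Minimum dominating and minimum irredundant sets of a disjoint union are exactly unions of such sets of the components. Moreover, $P_1$ and $P_3$ have both uniqueness properties, while $P_2$, $P_4$ and $P_5$ have neither. Hence $P_6$ is a minimal non-member of ${\cal M}$, and by the theorem it must be one of $B_1,B_2,B_3$.

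What is missing is therefore the real structural work:
\begin{itemize}
\item identifying $B_1,B_2,B_3$ explicitly;
\item proving that a minimal violator has ${\rm ir}(H)=2$ (the value $1$ cannot occur, since a single vertex is maximal irredundant only if it dominates);
\item a case analysis of $I$, its private neighbours and the undominated vertices that forces some $B_j$ as an induced subgraph.
\end{itemize}
The necessity direction is a routine finite check, but it too is only announced.
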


\begin{figure}[h!] \centering\includegraphics[width=0.5\linewidth]{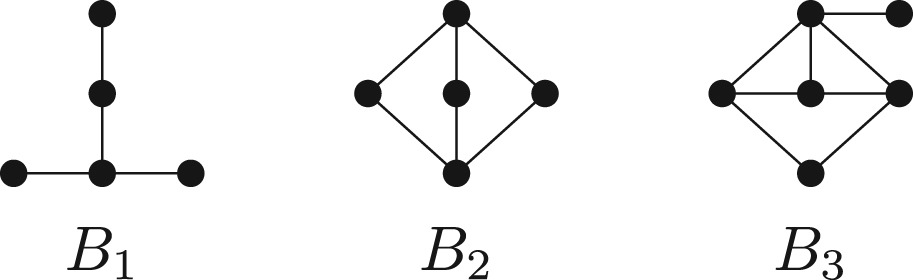} 
\caption{Graphs $B_1-B_3$.}
\label{B1B3} 
\end{figure}

\section{Proof of Theorem \ref{ZZ}}

In this section, we present a relatively concise proof of Theorem \ref{ZZ},
which is significantly shorter than the original proof in \cite{Zve1}.

The necessity follows from the  fact  that $\gamma (G_{j})=2$
and $i (G_{j})=3$ for $1\le j\le 17$.  
To prove  the  sufficiency, let $F$  be  a
minimum counterexample, i.e., $\gamma (F) < i (F)$, the  graph $F$  does  not
contain any of the induced subgraphs $G_{1}-G_{17}$ in Figure \ref{17graphs},  and $F$ is of minimum order. 
Let $D$ be a minimum dominating set of $F$ such
that the number of edges in $\langle D\rangle$ is minimum. Since $\vert D\vert =\gamma (F) < i (F)$
and $D$ dominates $F,$ it follows that the set $D$  is  dependent  and
hence contains some edge $uv$. Let
\begin{eqnarray*}
A &=& \{x\in V(F)\backslash D: N(x)\cap D=\{u\}\},\\
B &=& \{x\in V(F)\backslash D: N(x)\cap D=\{v\}\},\\
C &=& \{x\in V(F)\backslash D: N(x)\cap D=\{u,v\}\}.  
\end{eqnarray*}

%
%
\par
We need the following lemmas.

\up\begin{lem}
The graph $F$ satisfies $i (F)>2$ and
$V(F)=A\cup B\cup C\cup \{u,v\}$.
\end{lem}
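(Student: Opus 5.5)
The plan is to exploit the minimality of $F$ together with the Sumner--Moore reduction to the case $\gamma = 2$. Every proper induced subgraph $H$ of $F$ is $\{G_1,\dots,G_{17}\}$-free, so by minimality $\gamma(H)=i(H)$. Hence $F$ is minimal domination imperfect.

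\emph{Step 1: $\gamma(F)=2$.} I would use the Sumner--Moore theorem stating that $G$ is domination perfect if and only if $\gamma(H)=i(H)$ for all induced $H$ with $\gamma(H)=2$. Since $F$ is not domination perfect, some induced subgraph $H$ with $\gamma(H)=2$ satisfies $\gamma(H)<i(H)$. Every proper induced subgraph of $F$ has $\gamma=i$, so $H=F$ and $\gamma(F)=2$. If one prefers not to rely on that statement as a black box, the same conclusion follows from its standard argument. If $\gamma(F)\ge 3$, pick $w\in D$ and consider the subgraph $F'$ induced by $V(F)\setminus (N[w]\setminus D')$ for a suitable choice. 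The usual argument removes $N[w]$, applies minimality to $F-N[w]$ to obtain a small independent dominating set there, and then adds $w$. Still, citing the theorem is cleanest.

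\emph{Step 2: $i(F)>2$ and $D=\{u,v\}$.} Since $\gamma(F)=2<i(F)$, we get $i(F)>2$ immediately, i.e.\ $i(F)\ge 3$. Also $|D|=\gamma(F)=2$ and $D$ contains the edge $uv$, so $D=\{u,v\}$.

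\emph{Step 3: the partition.} Every $x\in V(F)\setminus\{u,v\}$ is dominated by $D=\{u,v\}$. So $N(x)\cap D$ is a nonempty subset of $\{u,v\}$, namely $\{u\}$, $\{v\}$ or $\{u,v\}$. This places $x$ in $A$, $B$ or $C$ respectively, giving $V(F)=A\cup B\cup C\cup\{u,v\}$.

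The only nontrivial point is Step 1, the reduction to $\gamma(F)=2$. The rest is bookkeeping.
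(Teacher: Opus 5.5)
Your argument is correct, but it reaches $\gamma(F)=2$ by a different route from the paper. You cite the Sumner--Moore reduction (Theorem 2) as a black box. You note that $F$ is minimal domination imperfect, so the witness $H$ with $\gamma(H)=2<i(H)$ must be $F$ itself; then $D=\{u,v\}$ and the partition follow at once.

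The paper instead proves the lemma directly. It sets $H=\langle A\cup B\cup C\cup\{u,v\}\rangle$ and argues by exchange:
\begin{itemize}
\item a dominating vertex $x$ of $H$ would make $(D\setminus\{u,v\})\cup\{x\}$ a smaller dominating set of $F$;
\item an independent dominating pair $\{x,y\}$ of $H$ would make $(D\setminus\{u,v\})\cup\{x,y\}$ a minimum dominating set of $F$ with fewer induced edges than $\langle D\rangle$.
\end{itemize}
Hence $\gamma(H)=2<i(H)$, and minimality of $F$ forces $F=H$, which gives $D=\{u,v\}$ as a by-product. This is essentially the Sumner--Moore argument specialized to this setting, and it is why $D$ was chosen with the fewest edges. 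Your version never uses that choice, only that $D$ is minimum and dependent, but it is shorter only because it is not self-contained.

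Your fallback sketch does not work as written. After deleting $N[w]$, the set $D\setminus\{w\}$ dominates $F-N[w]$ only if $w$ has no neighbour in $D$. Otherwise, vertices dominated solely by a $D$-neighbour of $w$ are lost. So you get $i(F)\le 1+\gamma(F-N[w])\le\gamma(F)$ only when $\langle D\rangle$ has an isolated vertex. The remaining case, where every vertex of $D$ has a $D$-neighbour, is exactly what the paper's exchange on an edge $uv$ handles. Since your main line cites Theorem 2, this is not a gap in the proof, but drop the sketch or replace it with the exchange argument.
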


\up\pf We first prove that 
$i(H)>2$, where $H=\langle A\cup B\cup C\cup \{u,v\}\rangle$.
Indeed, if $i (H)=1$ and $x$ is a dominating vertex of $H$, then $D$  is
not a minimum dominating set for $F$, since  the  set $D\backslash \{u,v\}\cup \{x\}$
dominates $F$. If $i (H)=2$ and $\{x,y\}$ is  an  independent  dominating
set of $H$, then $D\backslash \{u,v\}\cup \{x,y\}$ is a minimum dominating set  of $F$
whose induced subgraph contains fewer edges than $\langle D\rangle$, 
contradicting the
hypothesis.
Thus $\gamma (H)=2$, $i(H)>2$ and $H$ does  not  contain  any  of  the
induced subgraphs $G_{1}-G_{17}$. Since $F$ is a  minimum  counterexample,
we obtain $F=H$.  \qed

The vertex set of any of the graphs $G_{1}-G_{17}$ in Figure \ref{17graphs} can be represented in the form 
$V(F)=A\cup B\cup C\cup \{u,v\}$, since $\gamma (G_{j})=2$ for  all $1\le j\le 17$.  For  uniformity,  the
graph $G_{j}$ in such a representation will be cited as follows:
$$
\langle u,\hbox{vertices of } A;\; v,\hbox{vertices of } B;\;\hbox{vertices of } C\rangle.
$$

\begin{lem}
Let $a,b\in A$ and $c,d\in B$ be arbitrary vertices.
\be
\up\item[(i)] If $a\nssim c$, then $\{a,c\}$ dominates the graph $\langle A\cup B\rangle$;
\par
\up\up\item[(ii)] If $a\nssim b$ and $c\nssim d$, then $\langle a,b,c,d\rangle \simeq  2K_{2}$;
\up\up\item[(iii)] The graph $\langle N(a)\cap B\rangle$  $(\langle N(c)\cap A\rangle)$ is complete.
\ee
\end{lem}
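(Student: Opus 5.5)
We argue inside the minimum counterexample $F$. By Lemma~1, $V(F)=A\cup B\cup C\cup\{u,v\}$ and $i(F)>2$, and $F$ contains none of $G_1$–$G_{17}$. Recall that $u\ssim A\cup C$, $v\ssim B\cup C$, $u\ssim v$, $u\nssim B$ and $v\nssim A$. The tool used throughout is the observation that every \emph{independent} pair of vertices fails to dominate $F$, since $i(F)>2$. In particular:
\begin{itemize}
\item the pair $\{u,c\}$ with $c\in B$ is independent and dominates $A\cup C\cup\{u,v\}\cup(N(c)\cap B)$, so some $d\in B$ satisfies $d\nssim c$ (this holds for every $c\in B$);
\item symmetrically, for every $a\in A$ the pair $\{a,v\}$ shows that some $b\in A$ satisfies $b\nssim a$.
\end{itemize}
These two facts supply the extra vertices needed to build the $6$-vertex forbidden graphs $G_1$–$G_4$. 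Read from Figure~\ref{17graphs}, these are two adjacent centres each carrying two non-adjacent leaves ($G_1$), with some cross edges between the two leaf pairs added ($G_2$–$G_4$).

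\emph{Part (i).} Suppose $a\nssim c$ but some $w\in A\cup B$ is not dominated by $\{a,c\}$; by symmetry take $w\in A$, so $w\nssim a$ and $w\nssim c$. Choose $d\in B$ with $d\nssim c$ as above; then $d\ne c$, and $d\ne a,w$ since $d\in B$ and $a,w\in A$. Consider $\langle u,a,w;\,v,c,d\rangle$. Its forced edges are $ua,uw,uv,vc,vd$; its forced non-edges are $aw,ac,wc,cd$ together with everything between $u$ and $B$ and between $v$ and $A$. The only undetermined pairs are $ad$ and $wd$. If neither edge is present we get $G_1$; if exactly one is present we get $G_2$; if both are present we get $G_3$, since two cross edges sharing an endpoint is the configuration shown. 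Each case is a contradiction. The main work here is checking these identifications carefully against the figure, and I expect this to be the fiddliest step.

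\emph{Part (ii).} Let $a\nssim b$ and $c\nssim d$. First, each of $a,b$ is adjacent to at least one of $c,d$. Indeed, if $a\nssim c$ and $a\nssim d$, then part (i) says $\{a,c\}$ dominates $d$, which is impossible. By the symmetric argument (part (i) with the roles of $A$ and $B$ exchanged), each of $c,d$ is adjacent to at least one of $a,b$. Now look at $\langle u,a,b;\,v,c,d\rangle$, which is the same double-star skeleton with cross edges among $\{a,b\}\times\{c,d\}$. If some vertex, say $a$, has both cross edges $ac$ and $ad$, then $b$ contributes at least one more, giving three or four cross edges. I would check that these configurations are exactly $G_4$ (three cross edges), or else contain one of $G_1$–$G_4$ (four cross edges, possibly after substituting a different leaf). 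This is the second place where figure-matching is needed. Otherwise every vertex has exactly one cross edge, so the cross edges form a perfect matching between $\{a,b\}$ and $\{c,d\}$. Since $ab$ and $cd$ are non-edges, this gives $\langle a,b,c,d\rangle\simeq 2K_2$.

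\emph{Part (iii).} Suppose $c,d\in N(a)\cap B$ with $c\nssim d$. By the second bullet above there is $b\in A$ with $b\nssim a$. Part (ii) applied to $a,b,c,d$ forces $\langle a,b,c,d\rangle\simeq 2K_2$, which is impossible because $a$ is adjacent to both $c$ and $d$. The statement for $N(c)\cap A$ follows by the symmetric argument.
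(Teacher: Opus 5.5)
Your parts (i) and (iii) are correct and are the paper's arguments. Your reduction in (ii) is also fine: you show there are at least two cross edges, and exactly two must form a perfect matching. The four-cross-edge case is immediate, because it is \emph{exactly} $G_4$, the double star whose four leaves induce $C_4$.

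The gap is the three-cross-edge case, $\langle a,b,c,d\rangle\simeq P_4$, which you claim is $G_4$. It is not. Take $ac,ad,bd$ present and $bc$ absent. Then $\{b,c\}$ is an independent dominating set of $\langle u,a,b;v,c,d\rangle$: $b$ covers $u,d$ and $c$ covers $v,a$. So this six-vertex graph has $i=2$. Every $G_j$ has at least six vertices and $i(G_j)=3$, so this graph contains none of $G_1$--$G_{17}$. No contradiction can be extracted from these six vertices, and relabelling leaves does not change that.

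The missing idea is to use that $\{b,c\}$ is an independent pair of $F$. By Lemma~1 some vertex $f$ satisfies $f\nssim\{b,c\}$. Part (i) says $\{b,c\}$ dominates $\langle A\cup B\rangle$, and $u\ssim b$, $v\ssim c$, so $f\in C$. Put $L=\langle u,a,b,v,c,d,f\rangle$. Then:
\begin{itemize}
\item if $f\nssim\{a,d\}$, then $L\setminus v\simeq G_2$;
\item if $f\ssim d$ and $f\nssim a$, then $L\setminus v\simeq G_3$;
\item if $f\ssim a$ and $f\nssim d$, then $L\setminus u\simeq G_3$;
\item if $f\ssim\{a,d\}$, then $L\simeq G_6$.
\end{itemize}
So this case genuinely requires a vertex of $C$ and the seven-vertex forbidden graph $G_6$, and your plan uses neither.
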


\up\pf (i) Assume to the  contrary  that $\{a,c\}$  does  not
dominate $\langle A\cup B\rangle$. Then, there  exists  a  vertex $s\in A\cup B$  such  that
$s\nssim \{a,c\}$. Without loss of generality, let $s\in A$. By  Lemma  1,  the
set $\{u,c\}$ does not dominate $F$, hence there is a vertex $t\in B$  such
that $t\nssim c$. Consider the graph $\langle u,a,s;v,c,t\rangle$. Depending  on  the  existence  of
edges $at$ and $st$, this graph is  isomorphic  to $G_{1}$, $G_{2}$  or $G_{3}$, a
contradiction.
\par
(ii) Assume to the contrary that $\langle a,b,c,d\rangle\not\simeq 2K_{2}$.  Then,  if
$\langle a,b,c,d\rangle$ contains 0, 1 or 2 edges, we have  a  contradiction  to (i).
Therefore, $\langle a,b,c,d\rangle \simeq  P_{4}$  or $C_{4}$.  If $\langle a,b,c,d\rangle\simeq C_{4}$  then
$\langle u,a,b;v,c,d\rangle\simeq G_{4}$, a contradiction. Thus, $\langle a,b,c,d\rangle\simeq P_{4}$,  and  let
$b\nssim c$. By Lemma 1, the set $\{b,c\}$ does not dominate $F$, i.e.,  there
is a vertex $f\in V(F)$ such that $f\nssim \{b,c\}$.
We further know that $\{b,c\}$ dominates $\langle A\cup B\rangle$, so $f\in C$.
Let $L$ denote the  resulting  graph. 
If $f\nssim \{a,d\}$, then $L\backslash v\simeq G_{2}$.
If $f\nssim a$ and $f\ssim d$ ($f\ssim a$  and
$f\nssim d$), then $L\backslash v\simeq G_{3}$ ($L\backslash u\simeq
G_{3}$). 
Finally, if $f\ssim \{a,d\}$,  then $L\simeq G_{6}$.  In
all cases a contradiction is obtained.
\par
(iii) Suppose that $a\ssim \{c,d\}$  and $c\nssim d$. By Lemma  1,  since
$\{a,v\}$ does not dominate $F$, there is a vertex $b\in A$ such that
$b\nssim a$.
Then $\langle a,b,c,d\rangle  \not\simeq  2K_{2}$, contrary to Lemma 2(ii).  Consequently,
$c\ssim d$.       \qed

\vspace*{-0.3cm}
\begin{lem}
Let $a,b\in A$, \,$c,d\in B$ and $f\in C$.
If $a\nssim b$ and $c\nssim d$,  then $f$  is
adjacent to at least two vertices of the set $\{a,b,c,d\}$.
\end{lem}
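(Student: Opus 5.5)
By Lemma 2(ii), $\langle a,b,c,d\rangle\simeq 2K_2$. Since $a\nssim b$ and $c\nssim d$, the two edges must join $A$ to $B$, so without loss of generality $a\ssim c$, $b\ssim d$, $a\nssim d$ and $b\nssim c$. I would argue by contradiction: suppose $f\in C$ is adjacent to at most one vertex of $\{a,b,c,d\}$. Then I would consider the induced subgraph
$$L=\langle u,a,b;\;v,c,d;\;f\rangle$$
on seven vertices. Its adjacencies are all known except those between $f$ and $\{a,b,c,d\}$. Recall that $f\ssim\{u,v\}$, while $u\nssim B$ and $v\nssim A$.

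The proof then reduces to a small case analysis on $N(f)\cap\{a,b,c,d\}$. By the symmetry that swaps $u\leftrightarrow v$, $A\leftrightarrow B$ and $(a,c)\leftrightarrow(d,b)$, there are only two cases. In the first, $f$ has no neighbour in $\{a,b,c,d\}$. In the second, $f$ has exactly one such neighbour, say $a$; the cases $b$, $c$, $d$ follow from this one by the symmetries $a\leftrightarrow b,\ c\leftrightarrow d$ and $A\leftrightarrow B$.

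In each case I would identify $L$, or $L$ with $u$ or $v$ deleted, with one of the forbidden graphs. The natural candidates are the seven-vertex graphs $G_5$ and $G_6$, or one of the six-vertex graphs $G_1$–$G_4$ obtained after deleting $u$ or $v$. Any of these contradicts the choice of $F$.

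The required identification can be checked directly. In the case $f\nssim\{a,b,c,d\}$, the graph $L$ is $u$ adjacent to $a,b,v,f$; $v$ adjacent to $c,d,f$; and the edges $ac$ and $bd$. This graph has $\gamma=2$, witnessed by $\{u,v\}$, and no independent dominating set of size 2. Indeed, $\{a,d\}$ misses $f$, and every other independent pair misses some vertex, so $i(L)=3$. It should therefore be one of $G_5$–$G_{17}$ with seven vertices, probably $G_5$, which is $C_6$-like with a center. In the case where $f$ is adjacent to $a$ only, the pair $\{a,d\}$ is still not dominating, since it misses $v$'s other neighbours appropriately. The same reasoning should produce $G_6$ or a six-vertex $G_j$ after deleting $v$.

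If some case fails to match a forbidden graph directly, the fallback is Lemma 1. I would use it to find an additional vertex $g\notin N(\{a,d\})$, since $\{a,d\}$ is independent and therefore not dominating. Such a $g$ must lie in $C$ by Lemma 2(i). I would then examine $\langle L\cup\{g\}\rangle$ against $G_7$–$G_{17}$.

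I expect the main obstacle to be this matching of $L$ against the figure: confirming the exact isomorphism type of $L$ in each case. There is also a risk that the case $N(f)\cap\{a,b,c,d\}=\{a\}$ requires the extra vertex $g$ and a larger check.
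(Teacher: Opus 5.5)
Your zero-neighbour case is correct: $L$ is the $6$-cycle $a\,u\,b\,d\,v\,c$ with the chord $uv$ and with $f$ joined to both ends of the chord, which is $G_5$, as in the paper. You should state this isomorphism explicitly, since $\gamma(L)=2<3=i(L)$ by itself does not identify $L$.

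The genuine gap is the one-neighbour case, where your main claim is false. If $f\ssim a$ and $f\nssim\{b,c,d\}$, then $a$ dominates $u,c,f$ and $d$ dominates $v,b$. So $\{a,d\}$ is an independent dominating set of $L$, and it remains one in $L-u$ and $L-v$. Every $G_j$ has $i(G_j)=3$, so none of $L$, $L-u$, $L-v$ can be a forbidden graph; in fact no induced subgraph of $L$ is one of $G_1$--$G_6$. The search for $G_6$ or a six-vertex $G_j$ therefore cannot succeed. The extra vertex is not a fallback but the essential step.

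Your fallback starts exactly as the paper does. The paper takes $f\ssim b$ and the independent pair $\{b,c\}$, which your symmetry $a\leftrightarrow b$, $c\leftrightarrow d$ turns into $f\ssim a$ and $\{a,d\}$. As you say, Lemma 1 gives $g\nssim\{a,d\}$ and Lemma 2(i) gives $g\in C$. What is missing is the finish, in two steps.

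First, $g$ must have a neighbour in $\{b,c\}$. Otherwise your zero-neighbour case applied to $g$ gives $\langle u,a,b;v,c,d;g\rangle\simeq G_5$.

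Second, $N(g)\cap\{a,b,c,d\}$ is then $\{b\}$, $\{c\}$ or $\{b,c\}$. In these cases the eight-vertex graph $\langle u,a,b;v,c,d;f,g\rangle$ is one of $G_7$--$G_9$ when $f\nssim g$, and one of $G_{10}$--$G_{12}$ when $f\ssim g$. For example, $g\ssim c$ only with $f\nssim g$ gives $G_7$, and adding the edge $fg$ gives $G_{10}$. These six identifications are the actual content of the one-neighbour case, and the relevant targets are $G_7$--$G_{12}$, not $G_6$ or $G_{13}$--$G_{17}$. Your proposal does not carry them out.
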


\up\pf Suppose to the contrary that $f$ is  adjacent  to  at
most  one  vertex in  the  set $\{a,b,c,d\}$.  Without  loss  of
generality,  by  Lemma 2(ii), $a\ssim c$, $b\ssim d$  and $a\nssim
d$, $b\nssim c$.  If
$f\nssim \{a,b,c,d\}$, then $\langle u,a,b;v,c,d;f\rangle  \simeq  G_{5}$, a contradiction.
Hence, $f$
is adjacent to exactly one vertex from $\{a,b,c,d\}$, say  $f\ssim b$. By Lemma  1,  the  set $\{b,c\}$  does  not
dominate $F$, i.e., there is a vertex $g\in V(F)$ such that $g\nssim
\{b,c\}$.
By Lemma 2(i), $g\in C$.  If $g\nssim \{a,d\}$,  then
$\langle u,a,b;v,c,d;g\rangle \simeq G_{5}$, a contradiction.
Let us denote the resulting graph by $M$.  
If $f\nssim g$, then $M$ is isomorphic to
one of $G_7-G_9$, 
and if $f\ssim g$, then 
$M$ is isomorphic to one of $G_{10}-G_{12}$,
a contradiction.  \qed

\vspace*{-0.3cm}
\begin{lem}
Let $a,b\in A$, \,$c,d\in B$ and $f,g\in C$. If $\langle a,b,c,d,f\rangle $ induces a $P_{5}$  with
path $(a-c-f-b-d)$ and $g\nssim \{b,c\}$,  then $g\ssim \{a,d\}$
and $g\nssim f$.
\end{lem}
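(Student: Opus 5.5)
The plan is to fix the configuration and use Lemma 3 to limit where $g$ can attach. The $P_5$ $(a-c-f-b-d)$ gives the edges $a\ssim c$, $c\ssim f$, $f\ssim b$, $b\ssim d$. It also gives the non-edges $a\nssim b$, $a\nssim d$, $a\nssim f$, $c\nssim b$, $c\nssim d$ and $f\nssim d$. In particular $a\nssim b$ and $c\nssim d$, so the hypotheses of Lemma 3 hold for any vertex of $C$.

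\emph{Step 1: $g\ssim\{a,d\}$.} We are given $g\nssim\{b,c\}$. By Lemma 3, $g$ is adjacent to at least two vertices of $\{a,b,c,d\}$. The only candidates left are $a$ and $d$, so $g\ssim a$ and $g\ssim d$. This step is immediate.

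\emph{Step 2: $g\nssim f$.} Suppose to the contrary that $g\ssim f$, and consider the induced subgraph
$$\langle u,a,b;\,v,c,d;\,f,g\rangle$$
on nine vertices. Every adjacency in it is determined:
\begin{itemize}
\item $u$ is adjacent to $v$, $a$, $b$, $f$, $g$;
\item $v$ is adjacent to $u$, $c$, $d$, $f$, $g$;
\item $f$ is adjacent to $u$, $v$, $c$, $b$, $g$;
\item $g$ is adjacent to $u$, $v$, $a$, $d$, $f$;
\item the remaining edges are $ac$ and $bd$.
\end{itemize}
The graph has $\{u,v\}$ as a dominating set, and the structure is symmetric: the map $a\leftrightarrow d$, $b\leftrightarrow c$, $f\leftrightarrow g$, $u\leftrightarrow v$ is an automorphism. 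I expect this graph to be isomorphic to one of $G_{13}$--$G_{16}$, the nine-vertex graphs in Figure~\ref{17graphs} that contain the $C$-vertex pattern. A likely candidate is the graph in which the two $C$-vertices are adjacent. Matching it against the figure would give the contradiction.

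If instead this graph is not itself forbidden, I would use the minimality of $F$ (Lemma 1), as in the proofs of Lemmas 2 and 3. For example, $\{a,d\}$ does not dominate $F$. By Lemma 2(i) this is impossible for a pair from $A\times B$ with $a\nssim d$, unless the undominated vertex is some $h\in C$. I would then add $h$ to the configuration and apply Lemma 3 to $h$, obtaining a graph of order at most ten or eleven that should match $G_{17}$ or one of $G_{13}$--$G_{16}$.

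The main obstacle is this identification step. It means checking the adjacency list above against the drawings of $G_{13}$--$G_{17}$, including the dotted/curved edges. If the nine-vertex graph is not directly forbidden, it also means choosing the right non-dominating pair to generate the extra vertex.
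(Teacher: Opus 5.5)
Your Step 1 is exactly the paper's argument. In Step 2 you chose the same subgraph the paper uses, and your adjacency list for it is correct.

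The gap is that Step 2 stops at ``I expect this graph to be isomorphic to one of $G_{13}$--$G_{16}$.'' Identifying the subgraph as a forbidden graph is the entire content of that step, so without it nothing has been proved. The hedge also rests on a miscount. $\langle u,a,b;v,c,d;f,g\rangle$ has eight vertices, not nine, so among $G_{13}$--$G_{16}$ only $G_{13}$ can match; $G_{14}$--$G_{16}$ have nine vertices, and a search for a nine-vertex match would fail. The fallback (getting an extra vertex $h\in C$ because $\{a,d\}$ does not dominate $F$) is not needed, and as sketched it contains no actual argument.

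The missing step is easy to supply, and it is exactly what the paper asserts: the graph is $G_{13}$. In the drawing of $G_{13}$, map your vertices as follows:
\begin{itemize}
\item $u$ and $v$ go to the top-middle and bottom-middle vertices;
\item $a$ and $b$ go to the top-left and top-right corners;
\item $c$ and $d$ go to the bottom-left and bottom-right corners;
\item $f$ and $g$ go to the left and right interior vertices.
\end{itemize}
When reading the figure, note that the two diagonals from the top-middle vertex to the bottom corners pass through the interior vertices. They therefore contribute four edges $uf$, $fc$, $ug$, $gd$, not two long edges. The remaining edges of $G_{13}$ are:
\begin{itemize}
\item the outer hexagon $a\,u\,b\,d\,v\,c\,a$;
\item the edges $vu$, $vf$, $vg$ at the bottom-middle vertex;
\item the path $a\,g\,f\,b$.
\end{itemize}
These $16$ edges are exactly the $16$ edges in your list, so the map is an isomorphism. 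The resulting contradiction with $G_{13}$-freeness finishes the proof exactly as in the paper.

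A side remark: the map $a\leftrightarrow d$, $b\leftrightarrow c$, $f\leftrightarrow g$, $u\leftrightarrow v$ is not an automorphism, since it sends the edge $cf$ to the non-edge $bg$. Dropping the swap $f\leftrightarrow g$ gives a genuine automorphism. You never use this claim, so it does no harm.
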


\up\pf Apply Lemma 3 to the set $\{a,b,c,d\}$ and  the  vertex
$g$.  Since $g\nssim \{b,c\}$,  we  have $g\ssim \{a,d\}$.   If   now $g\ssim f$,   then
$\langle u,a,b;v,c,d;f,g\rangle $  is  isomorphic  to $G_{13}$, a   contradiction.
Consequently, $g\nssim f$.       \qed

\vspace*{-0.4cm}
\begin{lem}
Let $a,b\in A$, \,$c,d\in B$, \,$f,g\in C$ and $h\in V(F)$. 
If $\langle a,b,c,d,f,g\rangle$ 
induces a $C_{6}$
with vertex order $(a-c-f-b-d-g-a)$
and $h\nssim \{f,g\}$,  then  only  four  cases  are
possible:

\smallskip
(1)\; $h\in A$,~ $h\ssim \{a,b,c\}$,~ $h\nssim d$;
\par
(2)\; $h\in A$,~ $h\ssim \{a,b,d\}$,~ $h\nssim c$;
\par
(3)\; $h\in B$,~ $h\ssim \{c,d,a\}$,~ $h\nssim b$;
\par
(4)\; $h\in B$,~ $h\ssim \{c,d,b\}$,~ $h\nssim a$.
\end{lem}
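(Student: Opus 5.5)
The plan is to locate $h$ using the partition $V(F)=A\cup B\cup C\cup\{u,v\}$ from Lemma 1, and then to determine its adjacencies to $a,b,c,d$ by applying Lemmas 3 and 4 to suitable sub-configurations of the $C_6$.

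\emph{Step 1: $h$ lies in $A\cup B$.} The vertices $u$ and $v$ are adjacent to every vertex of $C$, so $h\neq u,v$ because $h\nssim\{f,g\}$.

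To rule out $h\in C$, first list the structure of the $C_6$. Inside it, $a\nssim b$ and $c\nssim d$. We also have $a\ssim c$, $b\ssim d$, $a\nssim d$ and $b\nssim c$. Hence $\langle a,b,c,d,f\rangle$ induces the path $(a-c-f-b-d)$, and symmetrically $\langle a,b,c,d,g\rangle$ induces the path $(b-d-g-a-c)$.

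Now suppose $h\in C$ and apply Lemma 3 to $\{a,b,c,d\}$ and $h$: then $h$ is adjacent to at least two of $a,b,c,d$. If $h\nssim\{b,c\}$, Lemma 4 applied to the first $P_5$ gives $h\ssim\{a,d\}$ and $h\nssim f$. The same configuration with $h$ in place of $g$ needs checking. Here I would exhibit $\langle u,a,b;v,c,d;f,g,h\rangle$, or a suitable 8-vertex subset of it, as an induced copy of one of $G_{14}-G_{17}$.

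Otherwise $h$ is adjacent to $b$ or to $c$. Symmetrically, using the second $P_5$, it is adjacent to $a$ or $d$. In each of the remaining subcases, the graph $\langle u,a,b;v,c,d;f,g,h\rangle$ should again be isomorphic to one of $G_{13}-G_{17}$. I expect this case analysis to be the main obstacle: about four or five essentially different adjacency patterns of $h$ must each be matched against the figure.

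\emph{Step 2: $h\in A$ (the case $h\in B$ is symmetric, swapping $u,A,a,b$ with $v,B,c,d$).}

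First, $h$ has a neighbour in $\{a,b\}$. Otherwise $\{h,a,b\}$ is independent in $A$, and Lemma 2(ii) applied to $h,a$ and $c,d$ forces a $2K_2$ matching. Applied to $h,b$ and $c,d$, it forces another. Together these force $h$ to be adjacent to exactly one of $c,d$ in a way that conflicts with $a\ssim c$, $b\ssim d$. Indeed, for $\{h,a\}$ the pairing must use $h\ssim d$, while for $\{h,b\}$ it must use $h\ssim c$. Then $h\ssim\{c,d\}$, contradicting Lemma 2(iii), since $c\nssim d$.

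A similar Lemma 2 argument handles the case where $h$ is adjacent to only one of $a,b$. If $h\ssim a$ and $h\nssim b$, then Lemma 2(ii) on $\{h,b\},\{c,d\}$ gives $h\ssim c$, $h\nssim d$ (matching $b$–$d$). I would then try to show that $\langle u,a,h,b;v,c,d;f,g\rangle$ contains a forbidden $G_j$, and otherwise strengthen the argument to get $h\ssim\{a,b\}$. This subcase may need an extra forbidden-graph check.

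Once $h\ssim\{a,b\}$, Lemma 2(iii) applied to the neighbourhood $N(h)\cap B$ shows that $h$ cannot be adjacent to both $c$ and $d$.

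It remains to show that $h$ is adjacent to at least one of $c,d$. If $h\nssim\{c,d\}$, then $h\nssim\{c,d,f,g\}$. I would then either find a $G_j$ such as $G_{16}$ or $G_{17}$ in $\langle u,a,b,h;v,c,d;f,g\rangle$, or use Lemma 1: the set $\{h,\cdot\}$ is not dominating, which produces an extra vertex that completes a forbidden subgraph.

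This yields exactly cases (1) and (2), and symmetrically cases (3) and (4).
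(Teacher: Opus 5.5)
Your outline matches the paper's: exclude $u,v$, exclude $C$, then determine the neighbours of $h\in A$. However, the steps that carry the content are left as intentions, and in two places the route you indicate is not the one that works.

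\textbf{Step~1 ($h\notin C$).} Up to the automorphisms $(a,b,c,d,f,g)\mapsto(b,a,d,c,g,f)$ and $(a,b,c,d,f,g,u,v)\mapsto(c,d,a,b,g,f,v,u)$, there are five classes, and each needs its obstruction. For two of them your target, ``one of $G_{13}$--$G_{17}$ on $\langle u,a,b;v,c,d;f,g,h\rangle$'', is wrong:
\begin{itemize}
\item if $N(h)\cap\{a,b,c,d\}=\{a,b\}$, then $\langle h,a,b;v,d,f\rangle\simeq G_3$;
\item if $N(h)\cap\{a,b,c,d\}=\{a,b,d\}$, then $\langle h,a,b;v,c,g\rangle\simeq G_3$.
\end{itemize}
The missing idea is to use the edge $hv$ (recall $h\in C$ gives $h\ssim v$) as the central edge of a 6-vertex obstruction, in place of $uv$. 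The remaining classes $\{b,c\}$, $\{b,d\}$ and $\{a,b,c,d\}$ give $G_{14}$, $G_{15}$ and $G_{16}$ on $\langle u,a,b;v,c,d;f,g,h\rangle$.

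\textbf{Step~2 ($h\in A$).} The subcase $h\ssim a$, $h\nssim b$ cannot be closed by ``a similar Lemma~2 argument''. The configuration you reach, $h\ssim\{a,c\}$, $h\nssim\{b,d\}$, satisfies all of Lemma~2(i)--(iii) on $\{a,b,h,c,d\}$. What excludes it is the hypothesis $h\nssim g$: apply Lemma~3 to $\{h,b,c,d\}$ and $g$, which is adjacent only to $d$ among these four. This is exactly the forbidden-subgraph check you hoped to find, and Lemma~3 has already done it. Symmetrically, $h\nssim a$ is excluded by Lemma~3 applied to $\{a,h,c,d\}$ and $f$. Hence $h\ssim\{a,b\}$ follows at once, which also makes your $2K_2$ argument unnecessary.

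Your last step is simpler than you fear. If $h\nssim c$, then by Lemma~2(i) the pair $\{h,c\}$ dominates $d$, and $c\nssim d$ forces $h\ssim d$. No extra vertex from Lemma~1 and no search for $G_{16}$ or $G_{17}$ is needed.

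With these fixes the proof is complete. The paper orders the $A$-case slightly differently: Lemma~2(i) gives, say, $h\ssim c$; Lemma~2(iii) then gives $h\nssim d$ and $h\ssim a$; and Lemma~3 with $g$ gives $h\ssim b$.
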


\up\pf Let us assume that $h\in C$. By Lemma 3, $h$  is  adjacent
to at least two vertices  of $\{a,b,c,d\}$.  Taking  symmetry  into
account, we have the following 
possibilities:

(a) if $h\ssim \{a,b\}$ and $h\nssim \{c,d\}$, then 
$\langle h,a,b;v,d,f
\rangle \simeq  G_{3}$;

(b) if $h\ssim \{b,c\}$ and $h\nssim \{a,d\}$, then $\langle u,a,b;v,c,d;f,g,h
\rangle\simeq  G_{14}$;

(c) if $h\ssim \{b,d\}$ and $h\nssim \{a,c\}$, then $\langle u,a,b;v,c,d;f,g,h
\rangle\simeq  G_{15}$;

(d) if $h\ssim \{a,b,d\}$ and $h\nssim c$, then 
$\langle h,a,b;v,c,g
\rangle \simeq  G_{3}$;

(e) if $h\ssim \{a,b,c,d\}$, then $\langle u,a,b;v,c,d;f,g,h
\rangle\simeq  G_{16}$.

Thus, $h\in A\cup B$.
Assume $h\in A$; the case $h\in B$ is similar.  
By
Lemma 2(i), the vertex $h$ must  be  adjacent  to  at  least  one
vertex of $\{c,d\}$.

\textit{Case 1.} Suppose $h\ssim c$. By Lemma 2(iii), we have $h\nssim d$  and
$h\ssim a$.
Assume that $h\nssim b$. Then, by Lemma 3, $g$  must  be  adjacent  to  at
least two vertices  of $\{h,b,c,d\}$. This is a contradiction, since
$g\nssim \{h,b,c\}$. Consequently, $h\ssim b$.

\textit{Case 2.} Suppose $h\ssim d$. By Lemma 2(iii), we have $h\nssim c$  and
$h\ssim b$.
Assume that $h\nssim a$. Then, by Lemma 3, $f$  must  be  adjacent  to  at
least two vertices  of $\{a,h,c,d\}$. This is a contradiction, since
$f\nssim \{a,h,d\}$. Consequently, $h\ssim a$.
\qed

We proceed with the proof of Theorem \ref{ZZ}.
By  Lemma  1,
there exist vertices $a,b\in A$ and $c,d\in B$ such that $a\nssim b$
and $c\nssim d$.  By
Lemma 2(ii), $\langle a,b,c,d\rangle \simeq 2K_{2}$. Without loss of generality, let $a\ssim c$
and $b\ssim d$. In accordance with Lemma 1,  the  set $\{a,d\}$  does  not
dominate $F$. Therefore,  there  is  a  vertex $f\in V(F)$  such  that
$f\nssim \{a,d\}$. By Lemma 2(i), $f\in C$.  Then, $f\ssim \{b,c\}$  by  Lemma  3.  Now,
consider the vertices $b$ and $c$. Again, by  Lemma  1,  there  is  a
vertex $g\in V(F)$ such that $g\nssim \{b,c\}$. By Lemma 2(i), we have $g\in C$ and,
by Lemma 4, $g\ssim \{a,d\}$ and $g\nssim f$. 
Thus,
$\langle a,b,c,d,f,g\rangle$ 
induces a $C_{6}$
with vertex order $(a-c-f-b-d-g-a)$.

Consider the set $\{f,g\}$. By Lemma 1, there exists a  vertex
$h\in V(F)$ such that $h\nssim \{f,g\}$. In accordance with Lemma 5, there  are
four cases. These cases are symmetrical, so we can  assume  that
$h\in A$ and $h\ssim \{a,b,c\}$, $h\nssim d$.  By  Lemma  1,  since $\{h,v\}$  does  not
dominate $F$, there exists a vertex $k\in A$ such that $k\nssim h$.
By  Lemma 2(ii),
$\langle h,k,c,d\rangle \simeq 2K_{2}$, hence $k\ssim d$ and $k\nssim c$. Then, $k\ssim b$ by Lemma 2(iii). For
the set $\{h,k,c,d\}$ and $f,g$, we can apply Lemma 3, which  produces
$k\ssim \{f,g\}$. 
Thus, the only 
undetermined edge is $ka$.
Denote the resulting graph by $Q$, obtained from the graph in Figure~\ref{F7}(a) by deleting the vertex 
$l$ ($u$ and $v$ are omitted).

In accordance  with  Lemma  1,  the  set $\{k,c\}$  does  not
dominate $F$. Therefore,  there  is  a  vertex $l\in V(F)$  such  that
$l\nssim \{k,c\}$. By Lemma 2(i), $l\in C$.  Applying  Lemma  4  to  the  set
$\{h,k,c,d,f\}$ and the vertex $l$, we obtain $l\ssim \{h,d\}$  and
$l\nssim f$.  
We now apply Lemma 5. Assume  that $l\nssim g$.  Then, $\{h,k,c,d,f,l\}$  and  the
vertex $g$ satisfy the hypotheses of Lemma 5. By the conclusion of
Lemma 5, we have $g\not\in C$, a contradiction. Thus, $l\ssim g$.

Suppose that $l\nssim a$. By applying Lemma 3 to the set $\{a,b,c,d\}$
and $l$, we have $l\ssim b$. Since $l$ is adjacent to only  one  vertex  of
$\{a,k,c,d\}$,  it  follows  that $k\ssim a$,  for  otherwise  we  have  a
contradiction  to  Lemma  3.  Thus, $l\nssim a$, $l\ssim b$  and $k\ssim a$,  hence
$\langle u,a,b,h,k;v,c,d;f,g,l\rangle  \simeq  G_{17}$, a contradiction. Therefore,
$l\ssim a$.

Thus, we obtain four graphs shown in
Figure \ref{F7}(a). 
By Lemma 1, the set $\{l,f\}$ does not  dominate $F$,  so  there
exists a vertex $m\in V(F)$ such that $m\nssim \{l,f\}$.

\begin{figure}[t!] 
\vspace*{-2.5cm}
\hspace*{-1cm}
\centering\includegraphics[width=1.0\linewidth]{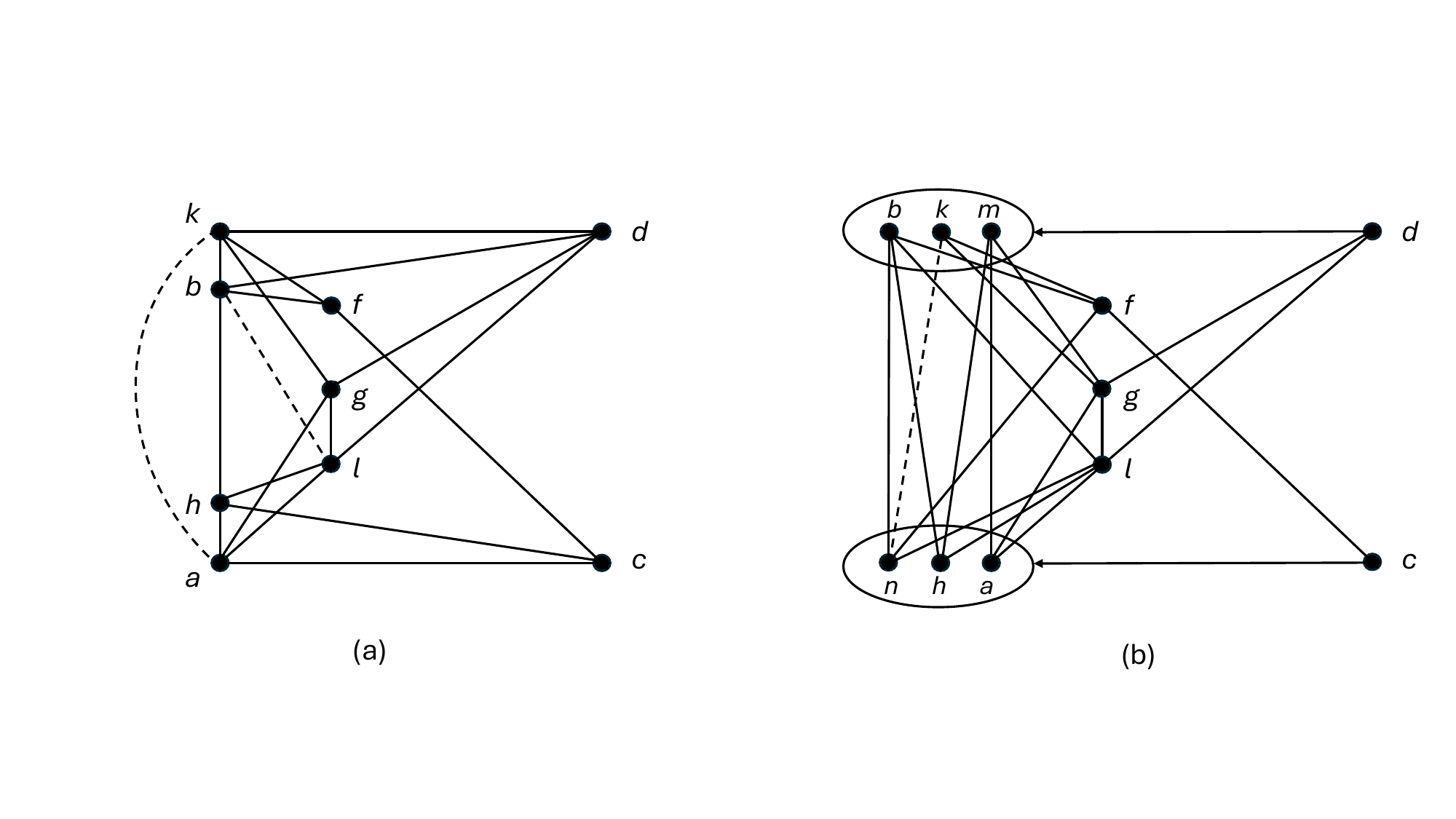} 
\vspace*{-1.6cm}
\caption{(a) Graphs used in Lemma 6; 
(b) Graphs after adding the vertex $n$.}
\label{F7} 
\vspace*{-0.2cm}
\end{figure}

\vspace*{-0.1cm}
\begin{lem}
Let $a,b,h,k\in A$, $c,d\in B$, $f,g,l\in C$   and $m\in V(F)$.
If $\langle a,h,b,k,$ $c,d,f,g,l\rangle $ is one of the graphs in Figure \ref{F7}(a)  and
$m\nssim \{f,l\}$, then

\smallskip
(1)\; $m\in A$, $m\ssim \{a,b,h,k,d,g\}$ and $m\nssim c$;
\par
(2)\; $b\ssim l$ and $a\nssim k$.
\end{lem}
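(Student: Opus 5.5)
The plan is to reduce everything to Lemma 5 by finding a suitable induced $C_6$ through $f$ and $l$, and then to cut down the surviving cases with Lemmas 2 and 3. From the construction preceding the lemma, the adjacencies common to all four graphs in Figure \ref{F7}(a) are:
\begin{itemize}
\item $a\ssim c$, $b\ssim d$, $a\nssim b$, $c\nssim d$;
\item $f\ssim\{b,c,k\}$, $f\nssim\{a,d,h,g,l\}$;
\item $g\ssim\{a,d,k,l\}$, $g\nssim\{b,c,h\}$;
\item $h\ssim\{a,b,c\}$, $h\nssim\{d,k\}$;
\item $k\ssim\{b,d\}$, $k\nssim c$;
\item $l\ssim\{h,d,a\}$, $l\nssim\{k,c\}$.
\end{itemize}
The only undetermined pairs are $ka$ and $lb$.

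The key observation is that $\langle h,k,c,d,f,l\rangle$ is an induced $C_6$ with vertex order $(h-c-f-k-d-l-h)$. Here $h,k\in A$, $c,d\in B$, $f,l\in C$, with $f\ssim\{c,k\}$ and $l\ssim\{h,d\}$, and every chord is absent ($h\nssim k$, $c\nssim d$, $h\nssim\{d,f\}$, $k\nssim\{c,l\}$, $f\nssim\{d,l\}$, $l\nssim c$). So Lemma 5 applies to this cycle and $m$, since $m\nssim\{f,l\}$. It gives $m\in A\cup B$ and leaves four cases:
\begin{itemize}
\item[(A1)] $m\in A$, $m\ssim\{h,k,c\}$, $m\nssim d$;
\item[(A2)] $m\in A$, $m\ssim\{h,k,d\}$, $m\nssim c$;
\item[(B1)] $m\in B$, $m\ssim\{c,d,h\}$, $m\nssim k$;
\item[(B2)] $m\in B$, $m\ssim\{c,d,k\}$, $m\nssim h$.
\end{itemize}

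Next I would kill (A1), (B1) and (B2) using the remaining vertices $a,b,g$. In case (A1), $m\ssim c$ and $k\ssim f$, $k\nssim c$. Consider the pair $m,k$: they are adjacent, so instead take the nonadjacent pairs $(a,b)$ in $A$ and $(c,d)$ in $B$. Applying Lemma 2(ii) to $\{m,x,c,d\}$ for a suitable $x\in\{a,b,h,k\}$ not adjacent to $m$, or Lemma 2(iii) to $N(c)\cap A\ni a,h,m$ together with $N(d)$, should force a contradiction. Failing that, Lemma 3 applied to $\{a,b,c,d\}$ and $g$ or $l$, now with $m$ substituted for $a$ or $b$, should exhibit an element of $C$ adjacent to at most one vertex of some $2K_2$.

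In cases (B1) and (B2), Lemma 2(iii) says $N(m)\cap A$ is complete. Since $m$ is adjacent to $h$ or $k$, I would check its adjacency to $a$ and $b$ ($a\nssim b$, so $m$ sees at most one of them). Lemma 2(i) and 2(ii) applied to the nonadjacent pairs $\{m,c\}$ or $\{m,d\}$ should not apply, since $m$ is adjacent to both $c$ and $d$. The contradiction should therefore instead come from Lemma 3 or Lemma 4 with $f$ or $l$, where $m$ plays the role of $c$ or $d$, and failing that from identifying one of $G_{13}$–$G_{17}$.

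This leaves (A2): $m\in A$, $m\ssim\{h,k,d\}$, $m\nssim c$. Lemma 2(iii) applied to $N(m)\cap B$ and $N(d)\cap A\supseteq\{b,k,m\}$, together with Lemma 2(ii) on pairs such as $\{m,?\}$ against $\{c,d\}$, should pin down $m\ssim\{a,b\}$. Lemma 3 applied to the relevant $2K_2$ and $g$ should give $m\ssim g$. This yields conclusion (1).

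For (2), I would argue as follows. If $b\nssim l$, then $\langle a,b,c,d,f,l\rangle$ is also an induced $C_6$ $(a-c-f-b-d-l-a)$. Lemma 5 then forces $m$ to be adjacent to exactly one of $c,d$ in a pattern that, combined with (1) ($m\ssim\{a,b,d\}$, $m\nssim c$), is consistent. So I expect the contradiction to come instead from $m\ssim\{a,h,k\}$ with $h\nssim k$, or with $a\ssim k$ via Lemma 2(iii)/(ii) on $\{m,\cdot,c,d\}$, or from identifying $G_{17}$-type configurations.

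The main obstacle is this final step: eliminating the three graphs of Figure \ref{F7}(a) other than the one with $b\ssim l$, $a\nssim k$. I would attempt it by directly exhibiting induced copies of $G_{13}$–$G_{17}$ on $\{u,v\}$ together with subsets of $\{a,b,h,k,m,c,d,f,g,l\}$.
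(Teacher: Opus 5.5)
Your first step is the paper's: $\langle h,k,c,d,f,l\rangle$ is an induced $C_6$ $(h-c-f-k-d-l-h)$, Lemma 5 applies to it and $m$, and this yields exactly the four cases (A1), (A2), (B1), (B2). Your list of adjacencies in Figure~\ref{F7}(a) is also correct.

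After that, the proposal only lists tools that ``should'' give contradictions, and they cannot. You work only inside the fixed set $\{u,v,a,b,h,k,m,c,d,f,g,l\}$ with Lemmas 2--5. The missing idea is the one that drives the whole paper: since $i(F)>2$, every pair of non-adjacent vertices fails to dominate $F$ (Lemma 1), and you must use this to bring in new vertices.

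Consider (A1). Lemma 2(iii) on $N(c)\cap A$ only gives $m\ssim a$. If $m\nssim b$, Lemma 3 on $\{m,b,c,d\}$ with $l,g$ merely forces $b\ssim l$ and $g\ssim m$; the contradiction comes from an induced $G_{17}$. If $m\ssim b$, then $m$ is adjacent to every $A$-vertex of the configuration, so it lies in no $2K_2$ there and Lemmas 3--5 yield nothing new about it. The paper gets $b\ssim l$ from a $G_{12}$. It then takes $n\in A$ with $n\nssim m$ (as $\{m,v\}$ does not dominate $F$), derives $n\ssim\{b,d,f,l\}$, and finds $G_{10}$ or $G_{12}$.

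The same obstruction hits (A2). There $m\ssim\{a,b\}$ does follow, from Lemma 2(iii) on $N(d)\cap A$ and Lemma 3 with $f$. But then there is no ``relevant $2K_2$'' containing $m$ to which Lemma 3 and $g$ could be applied. Even after adding such an $n$ (with $n\ssim c$), Lemma 3 only says $g$ sees $m$ or $n$. The paper needs $\langle l,b,v;a,m,c;g,n\rangle\simeq G_7,G_9,G_{10}$ or $G_{12}$ to conclude $m\ssim g$.

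Cases (B1) and (B2) carry most of the paper's work, and your sketch for them has no concrete step. Each needs several rounds of new vertices: $n$ and $p$ in Case 3, and $n,p,r,s$ with four subcases in Case 4, ending in copies of $G_2$, $G_3$, $G_{12}$ and $G_{17}$. You also look for Lemma 2(i) on $\{m,c\}$ or $\{m,d\}$, but those are $B$--$B$ pairs. Lemma 2(i) concerns a non-adjacent $A$--$B$ pair such as $\{a,m\}$ or $\{b,m\}$. The paper opens Case 3 exactly this way: $m\nssim a$ would force $a\ssim k$, and then $\langle a,k,l;c,f,m\rangle\simeq G_2$.

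For part (2), you correctly note that the $C_6$ $(a-c-f-b-d-l-a)$ gives nothing, but your fallback does not work either. $m\ssim\{a,h,k\}$ with $h\nssim k$ is not excluded by anything: Lemma 2(iii) constrains $N(x)\cap B$ for $x\in A$, not neighbourhoods inside $A$, and $m$ lies in no $2K_2$. The paper proves both facts inside Case 2 via induced copies of $G_{12}$ with a relabelled dominating pair. These are $\langle k,f,d;a,c,l;m,u\rangle$ if $a\ssim k$, and $\langle b,f,d;h,c,l;m,u\rangle$ if $b\nssim l$.

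As it stands, your proposal establishes only the reduction to four cases. The case analysis, which is the substance of the lemma, is missing.
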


\up\pf The set $\{h,k,c,d,f,l\}$ and the vertex $m$ satisfy  the
hypotheses of Lemma 5, hence there are four cases to consider.

\smallskip
{\it Case 1.} $m\in A$ and $m\ssim \{h,k,c\}$, $m\nssim d$. By  Lemma 2(iii), $m\ssim a$.
Assume that $m\nssim b$. Applying Lemma 3 to the set $\{m,b,c,d\}$ and $l,g$,
we have $b\ssim l$ and $g\ssim m$. Then, $\langle u,m,h,b,k;v,c,d;f,g,l\rangle  \simeq
G_{17}$, a contradiction. Consequently, $m\ssim b$.  Further, $b\ssim l$,  for  otherwise
$\langle h,l,c;b,d,f;m,u\rangle \simeq G_{12}$, a contradiction. 

By Lemma 1, the set $\{m,v\}$ does not dominate $F$. Hence, there
is a vertex $n\in A$ such that $n\nssim m$. Then, by Lemma 2(ii), $n\ssim d$ and,  by
Lemma 2(iii), $n\ssim b$. Now, we can apply Lemma 3  to $\{m,n,c,d\}$  and
$f,l$, which produces $n\ssim \{f,l\}$. We have $\langle l,a,v;b,m,f;d,n\rangle  \simeq  G_{10}$  or
$G_{12}$ depending on the existence of $an$, a contradiction.

\smallskip
{\it Case 2.} $m\in A$ and $m\ssim \{h,k,d\}$, $m\nssim c$. By
Lemma 2(iii), $m\ssim b$.
Assume that $m\nssim a$. Then, $\{a,m,c,d\}$ and $f$ satisfy the hypotheses  of
Lemma 3 and $f\nssim \{a,m,d\}$, contrary to the conclusion  of  Lemma  3.
Hence, $m\ssim a$. 
If 
$a\ssim k$, then $\langle k,f,d;a,c,l;m,u\rangle  \simeq  G_{12}$, so   $a\nssim k.$
If
$b\nssim l$,
then
$\langle b,f,d;h,c,l;m,u\rangle  \simeq  G_{12}$, so   $b\ssim l.$

By Lemma 1, there exists a vertex $n\in A$ such  that $n\nssim m$.  By
Lemma 2(ii), $n\ssim c$ and, by Lemma 2(iii), $n\ssim a$. Applying Lemma 3 to
$\{n,m,c,d\}$ and $l$, we have $l\ssim n$. 
Finally, if
$m\nssim g$, then $\langle l,b,v;a,m,c;g,n\rangle  \simeq  G_{7}, G_{9}, G_{10}\hbox{ or }G_{12}$, so   $m\ssim g.$
Thus, the conclusions of Lemma 6 are satisfied.

\smallskip
{\it Case 3.} $m\in B$ and $m\ssim \{c,d,h\}$, $m\nssim k$. Suppose
that $m\nssim a$. By Lemma
2(i), the  set $\{m,a\}$  dominates $\langle A\cup B\rangle $,  hence $a\ssim k$.  We  have
$\langle a,k,l;c,f,m\rangle  \simeq  G_{2}$, a contradiction, so $m\ssim a$.
Then, Lemma 2(iii) implies $m\nssim b$, since $a\nssim b$. Further, if
$a\ssim k$, then $\langle a,l,m;k,d,f\rangle  \simeq  G_{3}$, so   $a\nssim k.$
If
$b\nssim l$, then $\langle b,h,f;d,m,l\rangle  \simeq  G_{3}$, so  $b\ssim l$.
If
$m\nssim g$, then $\langle b,h,f;d,m,g\rangle  \simeq  G_{2}$, so  $m\ssim g.$

By Lemma 1, there is a vertex $n\in B$ such that $n\nssim m$.
By  Lemma  2(ii),
$\langle h,k,m,n\rangle \simeq 2K_{2}$, so $n\ssim k$ and $n\nssim h$.
By Lemma 2(iii), $n\ssim d$.
If $n\ssim a$, then we have a contradiction to Lemma 2(iii),  since $n\ssim k$
and $a\nssim k$. Hence, $n\nssim a$.
The set $\{b,m\}$ dominates $\langle A\cup B\rangle $  by  Lemma  2(i), and so $n\ssim b$. For the set $\{h,k,m,n\}$ and $l$, we can apply Lemma
3, which produces $n\ssim l$. Assume that $n\nssim g$. Then, $\{k,h,m,n,l\}$  and $g$
satisfy  the  hypotheses  of  Lemma 4.   Therefore, $g\nssim l$, a
contradiction. Consequently, $n\ssim g.$

The set $\{b,m\}$ does not dominate $F$ by Lemma 1.  Therefore,
there is a vertex $p\nssim \{b,m\}$ and $p\in C$ by Lemma 2(i). Applying Lemma
3 to $\{a,b,m,n\}$ and $p$, we obtain $p\ssim \{a,n\}$. 
If
$p\nssim g$, then $\langle u,b,p;g,n,m\rangle  \simeq  G_{3}$, so  $p\ssim g.$
If
$p\nssim l$, then $\langle v,m,p;l,a,b\rangle  \simeq  G_{3}$, so   $p\ssim l.$
Further, if
$p\nssim f$, then $\langle l,b,a;v,f,m;p,d\rangle  \simeq  G_{9}\hbox{ or }G_{12}$, so $p\ssim f.$
If
$p\ssim d$, then  $\langle l,b,a;v,f,m;p,d\rangle  \simeq  G_{13}$, so $p\nssim d.$
We now apply Lemma 3 to $\{a,b,c,d\}$ and $p$. We have $p\ssim c$.
Assume that $p\nssim h$.
The set $\{h,k,c,d,l\}$ and $p$ satisfy the hypotheses  of  Lemma
4, and so $p\nssim l$, a contradiction. Thus, $p\ssim h$. Finally,
if
$p\nssim k$, then $\langle p,f,h;g,k,m;l,v\rangle  \simeq  G_{12}$, so $p\ssim k$ and 
$\langle h,b,m;p,k,v;a,l\rangle  \simeq  G_{12}$, a contradiction.

\smallskip
{\it Case 4.} $m\in B$ and $m\ssim \{c,d,k\}$, $m\nssim h$. 
If $m\ssim \{a,b\}$,  
we obtain a contradiction to Lemma 2(iii), and
if $m\nssim \{a,b\}$, 
we obtain a contradiction to Lemma 2(i). 
If $m\nssim a$  and
$m\ssim b$, then $l\ssim b$,  for  otherwise $\langle b,f,m;h,c,l\rangle  \simeq  G_{3}$.  We  have
$\langle v,f,m;l,b,a\rangle  \simeq  G_{3}$, a contradiction. Thus, $m\ssim a$  and
$m\nssim b$.  Since
$m\ssim \{a,k\}$, by Lemma 2(iii), $k\ssim a$. 
Now,
if
$m\nssim g$, then  $\langle b,f,h;d,g,m\rangle  \simeq  G_{1}$, so $m\ssim g.$
If $b\nssim l$, then $\langle c,f,m;h,b,l\rangle  \simeq  G_{2}$, so $b\ssim l.$

By Lemma 1, there is a vertex $n\in B$ such that $n\nssim m$.  For  the
sets $\{a,b,m,n\}$ and $\{h,k,m,n\}$, we can apply Lemma  2(ii),  which
produces $n\ssim \{b,h\}$ and $n\nssim \{a,k\}$.
Then, by Lemma 2(iii), $n\ssim \{c,d\}$.
The application of Lemma 3 to the set $\{h,k,n,m\}$ and the vertices
$l$ and $f$ gives $n\ssim \{l,f\}$. 
If
$n\ssim g$, then $\langle u,f,h;g,n,m\rangle  \simeq  G_{3}$, so $n\nssim g.$

In accordance  with  Lemma  1,  the  set $\{k,n\}$  does  not
dominate $F.$
Therefore,  there  is  a  vertex $p\in V(F)$  such  that
$p\nssim \{k,n\}$. By Lemma 2(i), $p\in C$. The  application  of  Lemma  4  to
$\{h,k,n,m,f\}$ and $p$ gives $p\ssim \{h,m\}$ and $p\nssim f$.
Suppose that $l\nssim p$.  Then,
$\{h,k,n,m,f,p\}$ and $l$ satisfy the  hypotheses  of  Lemma  5,  and
hence $l\not\in C$, a contradiction. Thus, $l\ssim p$.
Analogously, $g\ssim p$. Further,
$p\nssim a$ implies $\langle h,p,a;n,d,f\rangle  \simeq G_{1}$ or $G_{2}$,
hence $p\ssim a$.  Assume  that $p\nssim d$.
The application of Lemma 3 to $\{h,k,c,d\}$ and $p$  gives $p\ssim c$,
and we have \linebreak $\langle v,d,m,c,n;u,k,h;f,l,p\rangle  \simeq G_{17}$, a  contradiction.
Hence, $p\ssim d$. 

By Lemma 1, the set $\{f,p\}$ does not dominate $F$,  and  hence
there  is  a  vertex $r\in V(F)$  such   that $r\nssim \{f,p\}$. The set
$\{h,k,n,m,f,p\}$  and $r$  satisfy  the  hypotheses  of   Lemma   5.
Therefore, there are four cases to consider.

\smallskip
\textit{Subcase 4.1.} $r\in A$ and $r\ssim \{h,k,n\}$, $r\nssim m$.
By  Lemma 2(iii),
$r\ssim b$, since $n\ssim \{b,r\}$. If $r\nssim a$,
then we obtain a contradiction to
Lemma  3,  since $f$  is
adjacent to one vertex of $\{a,r,m,n\}$. Hence, $r\ssim a$.  If $r\nssim d$,  then
$\langle d,n,p;k,f,r\rangle  \simeq  G_{3}$, a contradiction. Therefore, $r\ssim d$. Now, $r\nssim c$, for
otherwise $r\ssim \{c,d\}$ and $c\nssim d$, contrary to Lemma 2(iii). 
If
$p\nssim c$, then $\langle c,m,f;h,p,r\rangle  \simeq  G_{2}$, so $p\ssim c.$
If
$r\nssim g$, then $\langle p,c,d;u,f,r;g,a\rangle  \simeq  G_{9}$, so $r\ssim g.$
By Lemma 1, the vertex $r$ does not  dominate $\langle A\rangle $,  and  so
there is a vertex $s\in A$ such that $s\nssim r$.
By Lemma 2(ii), $s\ssim \{c,m\}$
and $s\nssim n$. Then, $s\ssim \{h,k\}$  by  Lemma  2(iii),  since $c\ssim \{h,s\}$  and
$m\ssim \{k,s\}$. 
Applying Lemma 3 to $\{r,s,n,m\}$ and $f,p$, we obtain $s\ssim \{f,p\}$. 
Finally, if
$s\ssim g$, then $\langle g,r,v;s,h,f;m,k\rangle  \simeq  G_{12}$, so $s\nssim g$ and
$\langle u,h,r,k,s;v,n,m;f,p,g\rangle  \simeq  G_{17},$ a contradiction.

\smallskip
\textit{Subcase 4.2.} $r\in A$   and $r\ssim \{h,k,m\}$, $r\nssim n$.   We   have
$\langle h,p,r;n,d,f\rangle  \simeq  G_{2}$ or $G_{3}$ depending on the existence of the  edge
$rd$, a contradiction.

\smallskip
\textit{Subcase 4.3.} $r\in B$ and $r\ssim \{n,m,h\}$, $r\nssim k$. Since $f$  is  adjacent
to only one vertex of $\{h,k,r,d\}$, it follows by Lemma 3 that $r\ssim d$.
We obtain $\langle d,p,r;k,a,f\rangle  \simeq  G_{2}$ or $G_{3}$ depending on the existence of
the edge $ra$, a contradiction.

\smallskip
\textit{Subcase 4.4.} $r\in B$ and $r\ssim \{n,m,k\}$, $r\nssim h$.
Since $k\ssim \{d,r\}$, it follows by Lemma 2(iii) that $r\ssim d$.
We  have
$\langle d,p,k;n,h,f;r,v\rangle  \simeq  G_{12}$.  This  contradiction  completes   the
proof of Lemma 6. \qed
\smallskip

Thus, by applying Lemma 6 to the graphs in Figure 
\ref{F7}(a) and the vertex $m$,
we obtain the graph in 
Figure 
\ref{F7}(b) without the vertex $n$.
By Lemma 1, since $m$ does not  dominate $\langle A\rangle $,  there  is  a
vertex $n\in A$ such that $n\nssim m$. By Lemma 2(ii), $n\ssim c$
and $n\nssim d$.  Since
$c\ssim \{n,a,h\}$, it follows by Lemma 2(iii) that $n\ssim \{a,h\}$. 
Let us apply Lemma
3 to $\{n,m,c,d\}$ and $f,l$. We have $n\ssim \{f,l\}$. Further, if
$n\nssim b$, then $\langle b,l,m;f,v,n\rangle  \simeq  G_{3}$, so $n\ssim b.$
If $n\ssim g$, then $\langle l,b,v;a,m,c;g,n\rangle  \simeq  G_{13}$, so $n\nssim g.$
The resulting graphs are shown in Figure \ref{F7}(b)
(ovals represent complete graphs).

By Lemma 1, the set $\{n,d\}$ does not dominate $F$. Hence, there
is a vertex $p\in V(F)$ such that $p\nssim \{n,d\}$.  By  Lemma 2(i), $p\in C$.
Applying Lemma 4 to $\{n,m,c,d,l\}$ and $p$,  we  obtain $p\ssim \{m,c\}$  and
$p\nssim l$. Suppose that $p\nssim f$ and apply Lemma 5 to
$\{n,m,c,d,p,l\}$ and  $f$.
We have $f\not\in C$, a contradiction. There\-fore, \mbox{$p\ssim f$.}
Assume that $p\nssim k$
and consider the
induced subgraph
$\langle n,h,m,k,c,d,f,l,p\rangle$, where
the edges $nk$ and
$ph$ are undetermined.
If $n\nssim k$,
we obtain a contradiction to Lemma 3, since $p$  is  adjacent  to  one  vertex  of
$\{n,k,c,d\}$. Applying Lemma 3 to $\{h,k,c,d\}$ and $p$, we obtain $p\ssim h$.
Now, $\langle u,k,m,h,n;v,d,c;l,f,p\rangle  \simeq G_{17}$, a contradiction. Thus,
$p\ssim k$.
\par
The set $\{l,p\}$ dominates the resulting graph. By  Lemma  1,
there is  a  vertex $r\in V(F)$  such  that $r\nssim \{l,p\}$.
Consider  the subgraph 
$\langle n,h,m,k,c,d,f,l,p\rangle$.
It is obvious that this graph and the
vertex $r$ satisfy the hypotheses of Lemma 6. By  the  conclusions
of Lemma 6, $r\in A$, $r\ssim \{k,m,h,n,c,f\}$, $r\nssim d$ and
$h\ssim p$, $k\nssim n$. 
Let us 
apply  Lemma  6
to the subgraph
$\langle n,a,m,k,c,d,f,l,p\rangle$
and the vertex $r$.
We obtain $a\ssim \{p,r\}$. Now, if
$p\nssim g$, then $\langle a,p,n;g,k,v\rangle  \simeq  G_{3}$, so $p\ssim g.$
If $r\ssim g$, then $\langle g,p,d;r,h,f\rangle  \simeq  G_{3}$, so $r\nssim g.$
Further, if 
$p\ssim b$, then $\langle b,n,d;p,a,v;k,f\rangle  \simeq  G_{12}$, so $p\nssim b.$
If $r\nssim b$, then we have a contradiction to  Lemma  3,  since $p$  is
adjacent to one vertex of $\{r,b,c,d\}$. 
To summarize, the graph $F$ contains the induced subgraph shown in Figure \ref{F7}(b), with $n\nssim k$ and $u\ssim A\cup C$ and $v\ssim B\cup C$; also, $p$ is only adjacent to 
$\{u,v,a,h,k,m,s,c,f,g\}$ and $r$ is only adjacent to $\{u,a,h,n,b,k,m,c,f\}$.

By Lemma 1, the vertex $r$ does not dominate $\langle A\rangle $, and  hence
there is a vertex $s\in A$ such that $s\nssim r$.
By Lemma 2(ii), $s\ssim d$  and
$s\nssim c$. By Lemma 2(iii), $s\ssim \{b,k,m\}$.
Applying Lemma 3 to $\{r,s,c,d\}$
and $p,g,l$, we obtain $s\ssim \{p,g,l\}$. Now, if
$s\nssim a$, then $\langle a,p,r;l,v,s\rangle  \simeq  G_{3}$, so $s\ssim a.$
If $s\nssim h$, then $\langle h,p,r;l,v,s\rangle  \simeq  G_{3}$, so $s\ssim h.$
If $s\ssim f$, then $\langle k,d,r;p,v,a;f,s\rangle  \simeq  G_{13}$, so $s\nssim f.$
The only undetermined edge is
$sn$. It is straightforward  to  see
that the graph $\langle n,r,m,s,c,d,p,l\rangle $ is isomorphic to the  graph $Q$
(defined before Lemma 6)
with the correspondence respecting the partition into the parts
$A$, $B$ and $C$. We have already proved that  the  case $a\ssim k$ 
is impossible for the graph $Q$.
Therefore, we  can  assume
that $n\nssim s$ in our case.
Thus, in the graph $F$, 
$s\ssim \{u,a,h,b,k,m,d,g,l,p\}$
and
$s\nssim \{v,n,r,c,f\}$.

By Lemma 1, the set $\{b,p\}$ does not dominate $F$, hence there
is a vertex $x$ such that $x\nssim \{b,p\}$.
If $x\in A$, then we have a contradiction to Lemma 3,  since $p$
is adjacent to one vertex  of $\{x,b,c,d\}$.  Hence, $x\not\in A$  and
$x\ssim v$.
Suppose that $x\nssim l$ and apply Lemma 6 to $\langle k,m,h,n,d,c,l,f,p\rangle $ and $x$.
We have $x\in A$, a contradiction. Therefore, $x\ssim l$. 
If $x\nssim a$, then $\langle a,r,p;l,b,x\rangle  \simeq  G_{2}\hbox{ or }G_{3}$, so $x\ssim a.$
If $x\nssim n$, then $\langle a,x,n;p,k,v\rangle  \simeq  G_{2}\hbox{ or }G_{3}$, so $x\ssim n.$
Now, if $x\ssim k$, then 
$\langle k,x,b;p,a,v\rangle  \simeq  G_{3}$, so $x\nssim k.$
Finally, if
$x\nssim r\hbox{ or }x\ssim s$, then  $\langle l,b,v;a,r,p;x,s\rangle  \simeq  G_{9},G_{12}\hbox{ or }G_{13}$, so $x\ssim r\hbox{ and }x\nssim s.$

Let $x\in B$, i.e.~$x\nssim u$. If $x\nssim h$, then $\{x,h\}$  does  not  dominate
the vertex $k$, contrary to Lemma 2(i). Hence, $x\ssim h$. Since $x\ssim n$,  we
obtain $x\nssim m$ by Lemma 2(iii).  
If $x\nssim d$, then $\langle u,r,p;l,x,d\rangle  \simeq  G_{2}$, so $x\ssim d.$
Now, if $x\nssim f$, then
$\langle b,d,f;h,x,p;r,m\rangle  \simeq  G_{13}$, so $x\ssim f$ and
$\langle b,d,h;f,x,p;n\rangle  \simeq  G_{6}$, a contradiction.

Let $x\in C$, i.e. $x\ssim u$. 
If $x\nssim c$, then $\langle l,b,v;a,m,c;x,n\rangle  \simeq  G_{12}\hbox{ or }G_{13}$, so  $x\ssim c.$
Assume that $x\nssim h$. Applying Lemma 3 to $\{h,k,c,d\}$ and $x$,  we  have
$x\ssim d$. 
Now, if $x\nssim f$, then $\langle x,c,d;u,f,h\rangle  \simeq  G_{3}$, so $x\ssim f$ and
$\langle b,d,h;f,x,p;r,k\rangle  \simeq  G_{13}$, a contradiction.
Consequently, $x\ssim h$. 
If
$x\nssim g$, then $\langle p,f,g;h,b,x\rangle  \simeq  G_{2}\hbox{ or }G_{3}$, so $x\ssim g.$
If
$x\ssim f$, then $\langle p,f,g;h,b,x;c\rangle  \simeq  G_{6}$, so $x\nssim f.$
Now, if
$x\nssim d$, then $\langle b,f,d;h,p,x\rangle  \simeq  G_{2}$, so $x\ssim d.$
If
$x\nssim m$, then $\langle m,d,p;r,x,f;h,b\rangle  \simeq  G_{13}$, so $x\ssim m.$

Thus, 
$x\ssim \{u,v,a,c,d,g,h,l,m,n,r\}$
and $x\nssim \{b,f,k,p,s\}$.
Now, the  set $\{g,r\}$  dominates
the resulting graph. By Lemma 1, there is a vertex $y\in V(F)$  such
that $y\nssim \{g,r\}$.
If $y\in A$, 
we obtain a contradiction to Lemma 3, since $g$ is  adjacent  to  one
vertex of $\{r,y,c,d\}$. Hence, $y\not\in A$ and $y\ssim v$.
\par
Let $y\in B$, i.e. $y\nssim u$. Since $y\nssim r$, it follows by  Lemma  2(i)
that $y\ssim s$. Now, $y\nssim n$ by Lemma 2(iii). Then, $y\ssim k$ by Lemma 2(i)  and
$y\nssim h$ by Lemma 2(iii). 
If $y\nssim f$, then $\langle f,k,n;v,g,y\rangle  \simeq  G_{3}$, so $y\ssim f.$
Now, if $y\nssim p$, then $\langle f,n,y;p,h,g\rangle  \simeq  G_{2}$, so $y\ssim p$ and 
$\langle h,l,r;p,g,y\rangle  \simeq  G_{2} \hbox{ or } G_{3}$, 
a contradiction.

Let $y\in C$, i.e. $y\ssim u$. Suppose that $y\nssim f$ and apply Lemma  5  to
$\{a,b,c,d,f,g\}$ and $y$. We obtain $y\not\in C$, a  contradiction.  Therefore,
$y\ssim f$. Further,
if $y\nssim p$, then $\langle p,h,g;f,r,y\rangle  \simeq  G_{2}\hbox{ or }G_{3}$, so $y\ssim p.$
If $y\nssim h$, then $\langle h,l,r;p,g,y\rangle  \simeq  G_{2}\hbox{ or }G_{3}$, so $y\ssim h.$
If $y\nssim l\hbox{ or }y\ssim m$, then $\langle p,g,f;h,l,r;y,m\rangle  \simeq  G_{9},G_{12}\hbox{ or }G_{13}$, so $y\ssim l$
and $y\nssim m.$
Finally, if
$y\nssim c$, then $\langle p,g,f;h,l,r;y,m,c\rangle  \simeq  G_{15}$, so 
$y\ssim c.$

Assume now that $y\nssim d$ and apply Lemma 3 to $\{r,s,c,d\}$ and $y$.
We obtain $y\ssim s$. Next,
if $y\nssim a$, then $\langle a,p,r;l,y,d\rangle  \simeq  G_{2}$, so $y\ssim a.$
If $y\ssim k$, then $\langle a,g,r;y,v,k;s\rangle  \simeq  G_{6}$, so $y\nssim k.$
If $y\nssim b$, then $\langle p,g,f;h,x,b;y,s\rangle  \simeq  G_{12}\hbox{ or }G_{13}$, so 
$y\ssim b.$
Further, if
$y\ssim x$, then $\langle y,x,p;b,d,r\rangle  \simeq  G_{3}$, so $y\nssim x$ and
$\langle h,b,x;p,k,v;a,y\rangle  \simeq  G_{12}$,
a contradiction. 
Therefore, 
$y\ssim d$.

Further, if
$y\nssim b$, then $\langle u,b,g;y,d,c\rangle  \simeq  G_{3}$, so $y\ssim b.$
If $y\ssim a$, then $\langle a,g,r;y,d,f;$
$h,p\rangle  \simeq  G_{12}$, so $y\nssim a.$
If $y\ssim x$, then $\langle u,g,r;y,d,c;b,p,x\rangle  \simeq  G_{16}$, so 
$y\nssim x.$
Now, if
$y\nssim s$, then $\langle d,s,x;y,h,f\rangle  \simeq  G_{3}$, so $y\ssim s.$
If $y\nssim k$, then $\langle u,y,k;x,c,d;l\rangle  \simeq  G_{6}$, so $y\ssim k.$
Finally, if
$y\nssim n$, then $\langle h,m,n;y,k,v\rangle  \simeq  G_{2}$, so $y\ssim n.$

Thus, 
$y\ssim \{u,v,b,c,d,f,h,k,l,n,p,s\}$
and $y\nssim \{a,g,m,r,x\}.$
Now, the  set $\{g,n\}$  dominates
the resulting graph. By Lemma 1, there is a vertex $z\in V(F)$  such
that $z\nssim \{g,n\}$. 
If $z\in A$, 
we obtain a contradiction to
Lemma 3, since $g$  is  adjacent
to one vertex of $\{n,z,c,d\}$. Hence, $y\not\in A$, i.e. $z\ssim v$. 
If $z\ssim a$, then $\langle a,z,n;g,v,k\rangle  \simeq  G_{2}\hbox{ or } G_{3}$, so $z\nssim a.$
If $z\nssim y$, then $\langle v,y,z;g,k,a\rangle  \simeq  G_{2}\hbox{ or }G_{3}$, so $z\ssim y.$
Finally, if
$z\nssim k$, then $\langle k,r,g;y,n,z\rangle  \simeq  G_{2}\hbox{ or }G_{3}$, so $z\ssim k$ and
$\langle k,r,g;y,n,v;s,z\rangle  \simeq  G_{7}, G_{9}, G_{10}\hbox{ or }G_{12}$, 
a contradiction.
This completes the proof of Theorem \ref{ZZ}. 
\qed

In conclusion, this proof provides a polynomial-time algorithm that transforms any dominating set 
$D$ of a domination perfect graph 
$G$ into an independent dominating set of size at most 
$\vert D\vert$.


\end{document}